\theoremstyle{definition}
\newtheorem{definition}{Definition}[section]
\theoremstyle{remark}
\theoremstyle{plain}
\newtheorem{theorem}[definition]{Theorem}
\newtheorem{lemma}[definition]{Lemma}
\newtheorem{proposition}[definition]{Proposition}
\newtheorem{corollary}[definition]{Corollary}
\numberwithin{equation}{section}
\title{Orthogonal polynomials in weighted Bergman spaces}
\author[1]{Erwin Mi\~{n}a-D\'{\i}az \thanks{Corresponding author; Email: minadiaz@olemiss.edu}}
\affil[1]{The University of Mississippi,
Department of Mathematics,
Hume Hall 305,P.~O.~Box 1848,
University, MS 38677-1848, USA.}
\date{\today}                     
\newcommand{\caliP}{\mathcal{P}}
\newcommand{\caliH}{\mathcal{H}}
\newcommand{\cj}{\overline}
\newcommand{\D}{\mathbb{D}}
\newcommand{\T}{\mathbb{T}}
\newcommand{\C}{\mathbb{C}}
\newcommand{\Z}{\mathbb{Z}}
\newcommand{\R}{\mathbb{R}}
\begin{document}
\maketitle

\begin{abstract}
Let $w$ be a weight on the unit disk $\D$ having the form  $$w(z)=|v(z)|^2\prod_{k=1}^s\left|\frac{z-a_k}{1-z\cj{a}_k}\right|^{m_k}\,,\quad m_k>-2,\ |a_k|<1,$$ where $v$ is analytic and free of zeros in $\cj{\D}$, and let $(p_n)_{n=0}^\infty$ be the sequence of polynomials ($p_n$ of degree $n$) orthonormal over $\D$ with respect to $w$.  We give an integral representation for $p_n$ from which it is in principle possible to derive its asymptotic behavior as $n\to\infty$ at every point $z$ of the complex plane, the asymptotic analysis of the integral being primarily dependent on the nature of the first singularities encountered by the function $v(z)^{-1}\prod_{k=1}^s(1-z\cj{a}_k)^{-1}$.  
\end{abstract}
{\bf Keywords:} Bergman orthogonal polynomials, integral representation, asymptotic behavior, strong asymptotics.

\section{Introduction}
Let $\D=\{z\in\C:|z|<1\}$ be the unit disk, and let $\T:=\partial \D$ be the unit circle. We use the letter $\sigma$ to denote the area measure divided by $\pi$, so that $\sigma(\D)=1$. 

A function $w(z)\geq 0$ defined on $\D$ is said to be a weight, provided that $0<\int_{\D}wd\sigma<\infty$, and that for every disk $D(z_0,\epsilon)$ centered at a point $z_0\in \T$ of radius $\epsilon>0$,
\[
\int_{D(z_0,\epsilon)\cap \D} w(z)d\sigma(z)>0.
\]

We write $\caliH(\D)$ for the linear space of holomorphic functions in $\D$. For a weight $w$ in $\D$, the weighted Bergman space  $A^2_{w}$ is the subspace of $\caliH(\D)$ defined as  
\[
 A^2_{w}:=\left\{f\in \caliH(\D):\int_{\D}|f(z)|^2w(z)d\sigma(z)<\infty\right\}.
 \]

The space $A^2_{w}$ is naturally endowed with the inner product 
\begin{align}\label{innerproductw}
	\langle f,g\rangle_w:=\int_{\D}f(z)\cj{g(z)}w(z)d\sigma(z),
\end{align}
which, by means of the Gram-Schmidt orthogonalization process applied to the powers $(z^n)_{n=0}^\infty$,  produces a unique sequence of polynomials $(p_n)_{n=0}^\infty$, $p_n$ of degree $n$ and positive leading coefficient, such that 
\begin{align}\label{orthoconditions}
	\langle p_n,p_m\rangle_w=\delta_{n,m}, \quad n,m\geq 0.
\end{align}
These polynomials  are often referred to as Bergman polynomials. We will denote the leading coefficient of $p_n$ by $\gamma_n$:
\[
p_n(z)=\gamma_nz^n+\cdots,\quad \gamma_n>0.
\]

 In this paper, we study the asymptotic behavior of $p_n$ as $n\to\infty$ for weights of the form 
\begin{align}\label{defweightw}
	w(z)=|v(z)|^2\prod_{k=1}^s\left|\frac{z-a_k}{1-z\cj{a}_k}\right|^{m_k},\quad s\geq 0,
\end{align}
where  $v$ is analytic and never zero in $\cj{\D}$, and 
\[
m_k\in (-2,\infty)\setminus\{0\}, \quad |a_k|<1,\quad 1\leq k\leq s. 
\]  
The condition that $m_k>-2$ guarantees the integrability of $w$ over $\D$. When $s=0$, the product in \eqref{defweightw}, as well as any other product of the form $\prod_{k=1}^0$, is to be interpreted as being constant $1$. We assume for convenience that $v(0)>0$.

The restrictions $u=w|_\T$, with $w$ as in \eqref{defweightw}, generate the class $\caliP_\T$ of positive analytic weights  on the unit circle, that is, weights $u$ of the form $u(z)=|v(z)|^2$, $z\in\T$, with $v$ analytic and free of zeros in $\cj{\D}$. 
Our investigation is motivated by the type of results obtained in  \cite{andrei} for the sequence $(\varphi_n)_{n=0}^\infty$ of Szeg\H{o} polynomials  corresponding to a weight $u\in \caliP_\T$. These are polynomials satisfying the orthonormality conditions
\[
\int_{\T}\varphi_n(z)\cj{\varphi_m(z)}u(z)|dz|=\delta_{n,m}\,,\quad n,m\geq 0.
\]

It has been shown in \cite{andrei} that for a weight in $\caliP_\T$, the Szeg\H{o} polynomial $\varphi_n$ admits an asymptotic expansion whose terms can be generated by the  successive application of a couple of Cauchy integral operators. The dominant term of the expansion is an integral whose behavior depends on the type of singularities encountered by the analytic continuation of the function $v^*(z)=1/\cj{v(1/\cj{z})}$. When these singularities are isolated or branch points, techniques involving contour deformation and residue computations allow for a precise asymptotic description of $\varphi_n$ inside the unit circle.

To a degree,  the results that we present in this paper can be viewed as an extension to the Bergman setting of those obtained in \cite{andrei}. Our main result (Theorem \ref{maintheorem}) asserts that for a weight as in \eqref{defweightw}, the Bergman polynomial $p_n$ can be asymptotically represented by a Cauchy-type integral whose behavior is influenced by the  singularities of the function $v^*(z)/\prod_{k=1}^s(z-a_k)$. The elegance and applicability of this integral resides in  its ability to encode the behavior of $p_n$ at every point of the complex plane.   

 Previously in \cite{MD2} (see also \cite{MD3}), we established similar results for the subfamily of \eqref{defweightw} consisting of weights $w=|h|^2$,  $h$ a polynomial without zeros in $\T$.  The  proof that we employed at the time was heavily based on constructing (for an $h$ with all its zeros in $\D$) an asymptotic expansion for $p_n$ similar to the one given in \cite{andrei} for Szeg\H{o} polynomials. For more general weights, however, our attempt at constructing these expansions has so far been unsuccessful, thus requiring us to look for a different  approach. Our success this time  comes from  a better understanding  of the reproducing kernel associated with the weights \eqref{defweightw}, and the key idea of iterating a certain identity (i.e., \eqref{polyfirstexpansion}), the details of which are explained later in Section \ref{sectionproofmaintheorem}.

We point out that there are some results in the literature that apply (fully or partially) to the weights under our consideration. In \cite{SimBlob}, Simanek  studied the asymptotic properties of $L^p(\mu)$-extremal polynomials for a broad class of measures $\mu$ supported on analytic regions. For a weight as in \eqref{defweightw}, Theorem 1.2 and Theorem 3.1 of \cite{SimBlob} yield 
 \begin{equation}\label{eq22}
	\lim_{n\to\infty}\frac{p_n(z)}{\sqrt{n} z^n}= \frac{1}{\cj{v(1/\cj{z})}}, \quad |z|> 1,
\end{equation}
the convergence being uniform for $|z|\geq r>1$.

A theorem by Suetin \cite[Thm. 3.3]{Suetin} for continuous, positive weights that satisfy a Lipschitz condition of order $\alpha<1$, establishes \eqref{eq22} with an estimate of $O((\ln n/n)^{\alpha/2})$ for the speed of convergence, which at $z=\infty$ \cite[Thm. 3.1]{Suetin} improves to $O((\ln n/n)^{\alpha})$. A better, sharp estimate will be given in Corollary \ref{firstcorollary} below.  

We also mention an earlier result that (when orthogonality is considered over the unit disk) applies to weights whose restriction to an annulus $\rho<|z|<1$ is of the form $|z^m/g(z)|^2$, with $g$ an analytic function without zeros in $\rho<|z|\leq \infty$, and $m$ an integer. Assuming that $g(\infty)>0$, we have 
\begin{equation*}
	\lim_{n\to\infty}\frac{p_n(z)}{\sqrt{n} z^n}= g(z), \quad |z|> \rho.
\end{equation*} 
 This formula was originally proven by Korovkin \cite{Korovkin} for $m=0$, and was later  extended to any integer value of $m$ in \cite[Sec. 3.3.4]{Smirnov}. We will see that for a weight as in \eqref{defweightw}, \eqref{eq22} also holds in the exterior of some circle of radius $<1$.

\section{Statement of results}
In this section, $w$ always stands for a weight of the form \eqref{defweightw}, and $p_n$ is the corresponding $n$th degree Bergman polynomial. 

Let  
\begin{align}\label{defqandq*}
	q(z):=\prod_{k=1}^s (z-a_k), \quad q^*(z):=\prod_{k=1}^s (1-\cj{a}_kz),
\end{align}
and define $\rho_w\geq 0$ to be the smallest number such that the function 
\[
\frac{1}{v(z) q^*(z)}
\]
can be analytically continued to the disk $|z|<1/\rho_w$. By our assumptions on the weight $w$, it is the case that $\rho_w<1$. 

Equivalently, if we define 
\begin{align}\label{exteriorszegofunction}
	v^*(z):=\frac{1}{\cj{v(1/\cj{z})}},
\end{align}
then $\rho_w$ is the smallest number such that the function $v^*/q$ has an analytic continuation to $|z|> \rho_w$.

The circle $|z|=\rho_w$ defines the boundary where $p_n$ has a marked change in behavior. This can be seen already in a weak sense from the fact that for all but finitely many $z\in \D$,
\[
\limsup_{n\to\infty}|p_n(z)|^{1/n}= \max\{|z|, \rho_w\}.
\]

As we explain below in Section \ref{reproducingkernelsection}, this is a consequence of a connection between the orthonormal polynomials and the reproducing kernel $K_h(z,\zeta)$ of the Bergman space $A^2_h$ associated with the weight 
\[
h(z):=\prod_{k=1}^s\left|\frac{z-a_k}{1-\cj{a}_kz}\right|^{m_k}\,.
\]

This kernel is a function defined on  $\D\times\D$,  analytic in $z$ and $\cj{\zeta}$, uniquely determined by the property that for all $f\in A^2_h$,
\begin{align*}
f(z)=\int_\D f(\zeta)K_h(z,\zeta)h(\zeta)d\sigma(\zeta), \quad z\in \D.	
\end{align*}

A structural formula for $K_h$ has been given in \cite{MD1}, and we shall elaborate on it later in Section \ref{reproducingkernelsection}, but for our immediate purpose, it suffices to note that  
\begin{align*}
L(z,\zeta):=\zeta^{-2}K_{h}(z,1/\cj{\zeta})
\end{align*}
is a rational  function that can be written as  
\begin{align}\label{firstformulaforL}
L(z,\zeta)=\frac{1}{(\zeta-z)^2} +\frac{V(z,\zeta) }{(\zeta-z)q^*(z)q(\zeta)},
\end{align}
with $V(z,\zeta)$ a polynomial in the variables $z$ and $\zeta$. We can now state our main result. 
\begin{theorem}\label{maintheorem} There exists a sequence $(H_n)_{n=0}^\infty$ of analytic functions  in  $\D$, all of them vanishing at the origin, such that:
\begin{enumerate}
\item[i)] $H_n(z)=O(1/n)$ uniformly as $n\to\infty$ on compact subsets of $\D$;
\item[ii)] For every $r$ and $z$ such that $\rho_w<r<1$ and $|z|<r$, 
\begin{align}\label{polymainformula}
v(0)\gamma_np_n(z)={} &\frac{1}{2\pi i v(z)}\int_{|\zeta|=r}(vv^*)(\zeta)L(z,\zeta)\zeta^{n+1}(1+H_n(\zeta)
)d\zeta\,.
\end{align}
\end{enumerate}
\end{theorem}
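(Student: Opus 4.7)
My plan is to start from the reproducing property of the kernel $K_h$ of $A^2_h$ applied to $vp_n$, convert the resulting area integral into a contour integral by exploiting the rational structure \eqref{firstformulaforL} of $L$, and then solve the resulting identity iteratively to extract the $(1+H_n)$ correction. Since $v$ is analytic and free of zeros in $\overline{\D}$, multiplication by $v$ embeds $A^2_w$ isometrically into $A^2_h$, so $vp_n\in A^2_h$ and the reproducing property gives
\[
v(z)p_n(z)=\int_{\D} v(\zeta)p_n(\zeta)K_h(z,\zeta)h(\zeta)\,d\sigma(\zeta).
\]
Substituting $K_h(z,\zeta)=\bar{\zeta}^{-2}L(z,1/\bar{\zeta})$ (equivalent to the definition of $L$) and using the inversion $\eta=1/\bar\zeta$, the integrand would be expressed in terms of the rational kernel $L(z,\eta)$, the function $v^*(\eta)$, and the reversed polynomial $p_n^*(\eta)$.

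I would then apply Stokes' formula together with the decomposition \eqref{firstformulaforL} and the orthogonality of $p_n$ to polynomials of degree less than $n$, in order to kill the polynomial contributions of the kernel and collapse the area integral to a contour integral on a circle $|\zeta|=r$ with $\rho_w<r<1$. This should produce the first expansion identity (the equation that the paper refers to as \eqref{polyfirstexpansion}), schematically of the form
\[
v(0)\gamma_n p_n(z)=\frac{1}{2\pi i v(z)}\oint_{|\zeta|=r}(vv^*)(\zeta)L(z,\zeta)\zeta^{n+1}\,d\zeta + (\mathcal{T}_n p_n)(z),
\]
where $\mathcal{T}_n$ is a Cauchy-type integral operator acting on $p_n$ whose kernel has size $O(1/n)$ on compacta of $\D$, the $1/n$ gain arising from an integration by parts against $\zeta^{n+1}$ combined with the orthogonality cancellations.

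The last step is iteration: substituting this identity into itself repeatedly and reorganizing the resulting Neumann-type series produces a multiplicative correction $1+H_n(\zeta)$ to the weight $(vv^*)(\zeta)$ inside the contour integral, with $H_n$ analytic on $\D$. Uniform convergence on compacta of $\D$ together with the $O(1/n)$ bound on $H_n$ follow from the operator bound on $\mathcal{T}_n$, while the normalization $H_n(0)=0$ should emerge structurally: each successive iterate vanishes at the origin as a consequence of the $\zeta^{n+1}$ factor in the kernel combined with the specific form of $\mathcal{T}_n$.

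The main obstacle I foresee is the rigorous derivation of the first expansion identity with the sharp $O(1/n)$ operator bound on $\mathcal{T}_n$. This requires delicate algebraic manipulation using the explicit polynomial $V(z,\zeta)$ from \eqref{firstformulaforL}, careful tracking of the boundary contributions produced by the Stokes reduction, and a genuinely $n$-sensitive estimate of the orthogonality-induced cancellations; a merely geometric contraction rate would fall short of the claimed $O(1/n)$ rate.
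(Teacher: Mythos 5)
Your high-level skeleton---a first expansion identity followed by iteration that assembles the multiplicative factor $1+H_n(\zeta)$, with $H_n(0)=0$ coming from the index shift $\zeta^{n+j+1}=\zeta^{n+1}\cdot\zeta^{j}$---does match the paper's strategy in spirit, but both load-bearing steps fail as you propose them. First, the Stokes reduction of the area reproducing identity is not available: $h(\zeta)=\prod_k\left|(\zeta-a_k)/(1-\bar{a}_k\zeta)\right|^{m_k}$ with arbitrary real exponents $m_k\in(-2,\infty)$ is not of the form $|g|^2$ with $g$ analytic, so the integrand $v\,p_n\,K_h(z,\cdot)\,h$ admits no $\bar{\partial}$-antiderivative collapsing the area integral to a contour integral; the Green's-formula trick works only for weights like $|e^z|^2$ (as in the paper's proposition relating Szeg\H{o} and Bergman polynomials). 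The paper bypasses the area integral entirely: it continues the bilinear series \eqref{kernelexpansionpolynomials} to $|\zeta|=1$ (where $1/\bar{\zeta}=\zeta$) and integrates it against $v(\zeta)\zeta^{n-1}$, producing \eqref{polyfirstexpansion} with the coefficients $\alpha_{n,k}$ of \eqref{defalphas}; crucially $\alpha_{n,k}=0$ for $k<n$ and $\alpha_{n,n}=\gamma_n/c_0=v(0)\gamma_n$, which is the only route to the prefactor $v(0)\gamma_n$ in \eqref{polymainformula}---your reproducing identity starts from $v(z)p_n(z)$ and never connects to $\gamma_n$. Note also that the error term is not an operator applied to $p_n$ but the explicit tail $-\sum_{k>n}\alpha_{n,k}p_k$, a combination of \emph{higher-degree} orthonormal polynomials; the iteration substitutes the analogous identity for $p_{n+1}, p_{n+2},\dots$ in turn, and its convergence rests on $\limsup_k|p_k(z)|^{1/k}=\tau_z<1$ inside $\D$ (proved via the rationality of $K_h$), a point your Neumann-series picture does not supply: the natural coefficient bounds give products growing like $e^{c\sqrt{m}}$, so the geometric decay of $p_k(z)$ is genuinely needed to kill the tail.

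Second, and most seriously, the one-shot $O(1/n)$ bound you hope to extract by ``integration by parts against $\zeta^{n+1}$'' is not obtainable---you correctly flagged this as the crux, but the mechanism you propose cannot deliver it. The honest first-pass bound is only $|\alpha_{n,k}|\le C$ for $k>n$ (Proposition \ref{prop-for-alphas}), which with $\alpha_{k,k}\sim\sqrt{k}$ yields merely $H_n=O(n^{-1/2})$. The paper reaches $O(1/n)$ by a bootstrap: it first establishes $\gamma_n=c_0\sqrt{n}\,(1+O(1/n))$ independently, via the Faber-polynomial norm computation \eqref{normoffaberpolys} and the extremal property of $\gamma_n^{-1}p_n$ (the paper explicitly notes \eqref{gammanformula} must be in hand \emph{before} \eqref{polymainformula}); it then feeds the preliminary representation \eqref{formula-to-be-improved}, with $H_k=O(k^{-1/2})$, back into the contour formula \eqref{secondformulaforthealphas} for $\overline{\alpha_{n,k}}$, retracts contours across the annulus picking up residues, and obtains the refined bound \eqref{bestestimatealpha}, $|\alpha_{n,k}|\le C_\eta(\eta^{k-n}+\rho_a^{k-n})/\sqrt{k}$. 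It is the combination of the $1/\sqrt{k}$ factor with geometric decay in $k-n$ that gives $H_n=O(1/n)$ when the iteration is re-run with $\lambda=1/2$. No oscillatory or integration-by-parts argument produces the $1/\sqrt{k}$: it comes from the near-extremality estimate $\|p_k-(\gamma_k/c_0)F_k\|_w=O(k^{-1/2})$, i.e., from the Faber comparison. Without the Faber machinery and the self-improvement loop, your plan stalls at $O(n^{-1/2})$ at best, and the $\gamma_n$ normalization in the statement remains unproved.
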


From \eqref{firstformulaforL}, we see that the integral in \eqref{polymainformula} is  well-defined for the stated values of $z$. Its behavior as $n\to\infty$ changes according to the location of $z$ relative to the critical circle $|z|=\rho_w$. For $z$ outside this circle, \eqref{polymainformula} yields the following strong asymptotic formula.

\begin{corollary}\label{firstcorollary} For every $r>\rho_w$, 
	\begin{align}\label{strongasymptoticformula}
\frac{p_n(z)}{\sqrt{n}\,z^n}=v^*(z)+O(1/n)
\end{align}
uniformly on $|z|\geq r$ as $n\to\infty$.	
\end{corollary}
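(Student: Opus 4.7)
The plan is to apply the integral representation \eqref{polymainformula} at a slightly larger contour and deform it inward across the point $\zeta=z$, thereby isolating as a residue the leading term proportional to $\sqrt{n}\,z^n v^*(z)$; the remaining integral is exponentially small, and the resulting asymptotic on a single circle $|z|=r$ is propagated to $|z|\geq r$ by the maximum principle on the exterior disk of the Riemann sphere. Since the function $p_n(z)/(\sqrt{n}\,z^n)-v^*(z)$ is analytic on $\{|z|>\rho_w\}\cup\{\infty\}$, a uniform bound on any single circle $|z|=r_0$ with $\rho_w<r_0<1$ implies the bound on $|z|\geq r_0$, and hence for every $r>\rho_w$. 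I therefore fix $r\in(\rho_w,1)$ (generic, avoiding the moduli $|a_k|$) together with auxiliary radii $\rho_w<r'<r<R<1$.

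For $|z|=r$, Theorem~\ref{maintheorem} at contour radius $R$ reads $v(0)\gamma_n v(z)p_n(z)=(2\pi i)^{-1}\oint_{|\zeta|=R}F(z,\zeta)\,d\zeta$, where $F(z,\zeta):=(vv^*)(\zeta)L(z,\zeta)\zeta^{n+1}(1+H_n(\zeta))$. Because $v^*=q\cdot(v^*/q)$ is analytic in $|\zeta|>\rho_w$, the only singularity of $F(z,\cdot)$ in the annulus $\rho_w<|\zeta|<1$ is the double pole at $\zeta=z$, so contracting the contour from $|\zeta|=R$ down to $|\zeta|=r'$ yields
\[
v(0)\gamma_n v(z)p_n(z)=\operatorname*{Res}_{\zeta=z}F(z,\zeta)+\frac{1}{2\pi i}\oint_{|\zeta|=r'}F(z,\zeta)\,d\zeta.
\]
Using \eqref{firstformulaforL}, the $1/(\zeta-z)^2$ piece of $L$ contributes $\tfrac{d}{d\zeta}\bigl[(vv^*)(\zeta)\zeta^{n+1}(1+H_n(\zeta))\bigr]_{\zeta=z}$, with dominant term $(n+1)(vv^*)(z)z^n(1+H_n(z))$, while the remaining piece contributes $(vv^*)(z)V(z,z)z^{n+1}(1+H_n(z))/[q^*(z)q(z)]$. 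Together, and using Cauchy's estimate to convert $H_n=O(1/n)$ into $H_n'=O(1/n)$, the residue equals $(n+1)(vv^*)(z)z^n(1+H_n(z))+z^{n+1}E_n(z)$ with $E_n$ bounded uniformly in $z$ and $n$ on $|z|=r$. On $|\zeta|=r'$ the integrand is bounded by a constant times $(r')^{n+1}$, so the remainder integral is $O((r')^{n+1})$. Dividing through by $v(0)\gamma_n v(z)\sqrt{n}\,z^n$ and setting $c_n:=(n+1)/(\sqrt{n}\,v(0)\gamma_n)$ gives, uniformly on $|z|=r$,
\[
\frac{p_n(z)}{\sqrt{n}\,z^n}=c_n v^*(z)(1+H_n(z))+O\!\bigl(1/(\sqrt{n}\,\gamma_n)\bigr)+O\!\bigl((r'/r)^n/(\sqrt{n}\,\gamma_n)\bigr).
\]

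With the preliminary bound $\gamma_n=O(\sqrt{n})$ in hand (available from \eqref{eq22}, which already yields $\gamma_n/\sqrt{n}\to1/v(0)$), both error terms are $O(1/n)$, and $c_nv^*(z)H_n(z)=O(1/n)$ on $|z|=r$ by the hypothesis on $H_n$, so $p_n(z)/(\sqrt{n}\,z^n)-c_nv^*(z)=O(1/n)$ there. Since both $p_n(z)/(\sqrt{n}\,z^n)$ and $v^*(z)$ extend analytically to $\{|z|>\rho_w\}\cup\{\infty\}$, the maximum principle on the exterior disk on the Riemann sphere transports this bound to all $|z|\geq r$, including $z=\infty$. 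Evaluating at $\infty$ gives $\gamma_n/\sqrt{n}-c_n/v(0)=O(1/n)$; substituting the definition of $c_n$ turns this into $v(0)^2\gamma_n^2-(n+1)=O(\gamma_n/\sqrt{n})$, whose solution (using $\gamma_n=O(\sqrt{n})$) is $v(0)\gamma_n=\sqrt{n}(1+O(1/n))$, so $c_n-1=O(1/n)$. Combining,
\[
\frac{p_n(z)}{\sqrt{n}\,z^n}-v^*(z)=\Bigl(\frac{p_n(z)}{\sqrt{n}\,z^n}-c_nv^*(z)\Bigr)+(c_n-1)v^*(z)=O(1/n)
\]
on $|z|\geq r$, which is \eqref{strongasymptoticformula}.

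The chief obstacle is the self-referential appearance of $\gamma_n$: the residue computation naturally produces the multiplicative constant $c_n=(n+1)/(\sqrt{n}\,v(0)\gamma_n)$ rather than $1$, so the sharp $O(1/n)$ rate for $c_n-1$ must be recovered \emph{a posteriori} by transporting the estimate to $z=\infty$ via the maximum principle and solving an algebraic equation for $\gamma_n$.
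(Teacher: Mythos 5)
Your proof is correct, and its analytic core is the same as the paper's: the paper also retracts the contour in \eqref{polymainformula} inward across the double pole at $\zeta=z$, picking up exactly the residue terms you list (this is the paper's identity \eqref{closetofinalrepresentation}, with your $z^{n+1}E_n(z)$ matching its derivative and $V(z,z)/(q^*q)$ terms), controls $H_n'$ by Cauchy's estimate from $H_n=O(1/n)$, obtains \eqref{strongasymptoticformula} on a single circle $|z|=r$, and transports it to $|z|\geq r$ by the Maximum Modulus Principle on the exterior domain. The one genuine difference is your handling of $\gamma_n$. The paper proves $\gamma_n=c_0\sqrt{n}\,(1+O(1/n))$ \emph{before} everything else (Proposition \ref{prop-for-gammas}, via the Faber-polynomial norm asymptotics \eqref{normoffaberpolys} and the extremality of monic orthogonal polynomials), and in fact needs it as an ingredient in the proof of Theorem \ref{maintheorem} itself, so in its proof of the corollary the normalizing constant is $1+O(1/n)$ from the outset. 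You instead treat Theorem \ref{maintheorem} as a black box (legitimate, and not circular, since the paper's \eqref{gammanformula} is established independently of the corollary), carry the unknown factor $c_n=(n+1)/(\sqrt{n}\,v(0)\gamma_n)$, and recover the sharp rate \emph{a posteriori} by evaluating at $z=\infty$ and solving $v(0)^2\gamma_n^2=(n+1)+O(1)$. This bootstrap is sound, but note that one small point in your write-up undersells what is used: the upper bound $\gamma_n=O(\sqrt{n})$ alone does not suffice — you need the two-sided bound $\gamma_n\asymp\sqrt{n}$ (lower bound included) both to make the error terms $O(1/n)$ and to have $c_n=O(1)$; fortunately the limit $\gamma_n/\sqrt{n}\to 1/v(0)$, which follows from \eqref{eq22} at infinity as you say, supplies exactly this. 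What your route buys is a self-contained re-derivation of \eqref{gammanformula} as a by-product, showing the corollary requires only Theorem \ref{maintheorem} plus weak leading-coefficient asymptotics; its cost is the appeal to the external result \eqref{eq22} (Simanek), which could be replaced by the paper's internal Proposition \ref{prop-for-gammas} — though at that point the bootstrap becomes redundant and the two proofs coincide.
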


If we evaluate \eqref{strongasymptoticformula} at $z=\infty$, then we get 
\begin{align}\label{gammanformula}
	\gamma_n=\frac{\sqrt{n}}{v(0)}\left(1 +O\left(1/n\right)\right), \quad (n\to\infty).
\end{align}
But in reality,  we will need to have \eqref{gammanformula} already established before we can prove  \eqref{polymainformula}.

The $O(1/n)$ estimate for the error term in \eqref{strongasymptoticformula} and \eqref{gammanformula} is, generally speaking, best possible, as this estimate becomes exact for the weight $w(z)=|z-b|^{2}$, $|b|>1$. This can be seen from the explicit formulas for $p_n$ and $\gamma_n$ discussed in \cite[Remark 2]{MD2}, \cite[Remark 6]{MD3}. 

In the next section, we touch on a few examples that illustrate how the behavior of $p_n$ in the closed disk $|z|\leq \rho_w$ is affected by the nature of the singularities that the function $v^*/q$ has on the critical circle $|z|=\rho_w$. If these singularities are poles or branch points, then the asymptotic analysis of the integral in \eqref{polymainformula} is relatively straightforward, and has been carried out in detail for integrals of a similar type, for instance, in \cite{andrei} and \cite{MD4} (\cite{andrei} also provides an example with an essential singularity). 
 
The function $L$ that occurs in \eqref{polymainformula} is defined through the kernel $K_h$. From the formula for this kernel given in \cite[Thm. 1.1]{MD1}, we can see that 
\begin{align}\label{secondformulaforL}
	L(z,\zeta)={} &\frac{1}{(\zeta-z)^2} +\sum_{k=1}^s\frac{\frac{m_k}{2}(1-|a_k|^2) }{(\zeta-z)(\zeta-a_k)(1-z\cj{a}_k)}+\frac{J(z,\zeta)}{q^*(z)q(\zeta)}, 
\end{align}
where $J(z,\zeta)$ is a polynomial in the variables $z$ and $\zeta$ of degree at most $2(s-2)$ ($s-2$ in each independent variable). For $0\leq s\leq 2$, $J(z,\zeta)$ is constant:  
\[
J(z,\zeta)=\begin{cases}0,& \ s=0,\ 1,\\
	K_h(0,0)-1-\frac{m_1}{2}(1-|a_1|^2)-\frac{m_2}{2}(1-|a_2|^2), &\ s=2.
\end{cases}
\]

For $s>2$, however, $J(z,\zeta)$ has a somewhat complex structure, but it can be explicitly  written in terms of the values of $K_h(z,\zeta)$ and its derivatives at $(0,0)$. These values can in turn be numerically computed by integration.

\section{Examples} 
\subsection{Case $s=0$}\label{cases=0}

For $s=0$, that is, when $w=|v|^2$ with $v$ analytic and never zero in $\cj{\D}$, \eqref{polymainformula} becomes  
\begin{align}\label{polymainformulanozeros}
	v(0)\gamma_np_n(z)={} &\frac{1}{2\pi i v(z)}\int_{|\zeta|=r}\frac{v(\zeta)v^*(\zeta)\zeta^{n+1}}{(\zeta-z)^2}(1+H_n(\zeta)
	)d\zeta\,.
\end{align}

By the definition of $\rho_w$, the function $v^*$ is analytic in $|z|>\rho_w$,  but it has singularities on the circle $|z|=\rho_w$. For $z$ in the disk $|z|\leq \rho_w$, the behavior as $n\to\infty$ of the integral in \eqref{polymainformulanozeros} 
depends on the nature of these first singularities. For instance, if $v(z)=\prod_{k=1}^{\ell}(1-\cj{b}_kz)^{r_k}$ is a polynomial with all its zeros outside the unit circle (i.e., $0<|b_k|<1$), then $v^*$ is a rational function whose poles are the points $b_k$:
\[
v^*(z)=z^r\prod_{k=1}^{\ell}(z-b_k)^{-r_k} ,\quad r=\sum_{k=1}^\ell r_k.
\]
The integral in \eqref{polymainformulanozeros} can then be computed without difficulty by using the residue theorem,  and we recover the results of \cite{MD3}.

To illustrate a possible approach in the presence of branch points, let us sketch the analysis for the weight   $v(z)=(1-\cj{b}z)^{r}$, $|b|<1$, $r\in \R\setminus\Z$. For this weight, \eqref{polymainformulanozeros} takes the form
\begin{align}\label{algebraicsingular}
	\gamma_np_n(z)\sim{}  &\frac{1}{2\pi i v(z)}\int_{|\zeta|=r}\left(\frac{\zeta-b}{\zeta}\right)^{-r}\frac{v(\zeta)\zeta^{n+1}}{(\zeta-z)^2}d\zeta\,.
\end{align}

 For a number $\mu$ such that $0<\mu<|b|$, let $C_\mu$ be the positively oriented contour that results from attaching the two-sided segment $[\mu b/|b|,b]$ to the circle $|\zeta|=\mu$. For $|z|<\mu$, the integral in \eqref{algebraicsingular} can be taken over $C_\mu$ without affecting its value, and  can further be split as the sum of an  integral over $|\zeta|=\mu$, which is $O(\mu^n)$, plus an integral over the two-sided segment  $[\mu b/|b|,b]$, which provides the dominant behavior as $n\to\infty$. In this way, we get
\begin{align}\label{algebraicsingular1}
	\gamma_np_n(z)\sim {}&\frac{\sin(\pi r)}{\pi  v(z)}\int_{[\mu b/|b|,b]}\left|\frac{\zeta-b}{\zeta}\right|^{-r}\frac{v(\zeta)\zeta^{n+1}}{(\zeta-z)^2}d\zeta.
\end{align}

Noting that  
\begin{align*}
\frac{v(\zeta)}{(\zeta-z)^2}=\frac{v(b)}{(b-z)^2}+O(\zeta-b)
\end{align*}
uniformly as $\zeta\to b$ on $|z|\leq R<\mu$,  and bearing in mind a few basic properties of the Gamma and Beta functions, we can conclude from  \eqref{algebraicsingular1} that 
\begin{align}\label{finaloutcome}
	\begin{split}
	\gamma_np_n(z)\sim{} &\frac{\sin(\pi r)(1-|b|^2)^{r}b^{n+2}}{\pi  (1-\cj{b}z)^{r}(b-z)^2}\int_{\mu/|b|}^1t^{n+1+r}(1-t)^{-r}dt\\
	\sim{} &\frac{(1-|b|^2)^{r}b^{n+2}}{(1-\cj{b}z)^{r}(b-z)^2}\cdot\frac{r(r+1)\cdots(r+n+1) }{(n+2)!} \,.
	\end{split}
\end{align}

These computations actually work if $r<1$, which guarantees integrability over the segment $[\mu b/|b|,b]$. When $r>1$, we first need to integrate by parts in \eqref{algebraicsingular} as many times as needed to bring $r$ into the appropriate range, and then proceed as we just explained (the final outcome is the same as \eqref{finaloutcome}). More detailed explanations can be found, for instance, in the proof of Proposition 1 of  \cite{MD4}.

If $v$ is meromorphic in $\C$ without zeros, then $\rho_w=0$, $v^*$ is an analytic function in $\cj{\C}\setminus \{0\}$, and \eqref{strongasymptoticformula} holds for $|z|>0$. For $v(z)=e^z$, there is a curious relationship between the sequences of Szeg\H{o} and Bergman polynomials.  

\begin{proposition} Let $(\psi_n)_{n=0}^\infty$ be the sequence of monic polynomials that are orthogonal on the unit circle with respect to the weight $|e^z|^2$, and let $p_n$ be the Bergman polynomial of degree $n$ for the same weight.  Then, $\psi_n(0)\not=0$ and
	\begin{align}\label{relationshippolys}
	\gamma_n^{-1}p_n(z)=\frac{\psi^*_n(z)+{\psi^*_n}'(z)}{\cj{\psi_n(0)}},\quad n\geq 0,
	\end{align}
where  $\psi^*_n(z):=z^n\cj{\psi_n(1/\cj{z})}$.
\end{proposition}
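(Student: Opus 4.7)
The plan is to verify that $Q_n(z):=\psi_n^*(z)+{\psi_n^*}'(z)$ is, up to a nonzero multiplicative constant, the $n$th Bergman polynomial for $w=|e^z|^2$. Since $p_n/\gamma_n$ is the unique monic polynomial of degree $n$ that is $A_w^2$-orthogonal to $1,z,\ldots,z^{n-1}$, it is enough to prove: (a) $Q_n$ is $A_w^2$-orthogonal to each $z^k$, $0\le k\le n-1$, and (b) $\psi_n(0)\neq 0$, so that $Q_n$ has degree exactly $n$ with leading coefficient $\overline{\psi_n(0)}$. The formula \eqref{relationshippolys} then follows by dividing by $\overline{\psi_n(0)}$.

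For (a), the key observation is the algebraic identity
\[
[\psi_n^*(z)+{\psi_n^*}'(z)]\,e^z=\bigl(\psi_n^*(z)\,e^z\bigr)'.
\]
Writing $w(z)=e^z e^{\overline{z}}$ and $F(z):=\psi_n^*(z)e^z$, the Bergman inner product becomes
\[
\int_{\D}Q_n(z)\,\overline{z^k}\,w(z)\,d\sigma(z)=\frac{1}{\pi}\int_{\D}F'(z)\,\overline{z}^k e^{\overline{z}}\,dA(z).
\]
Since $\overline{z}^k e^{\overline{z}}$ is antiholomorphic, $\partial_z(F\cdot\overline{z}^k e^{\overline{z}})=F'\cdot\overline{z}^k e^{\overline{z}}$, and the complex Stokes (Cauchy--Pompeiu) formula converts the integral to a contour integral over $\T$. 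Substituting $\overline{z}=1/z$, $d\overline{z}=-dz/z^2$, and $e^{z+1/z}=w(z)$ on $\T$, a short bookkeeping step reduces it to
\[
\frac{1}{2\pi}\oint_{\T}\psi_n^*(z)\,\overline{z^{k+1}}\,w(z)\,|dz|.
\]
This vanishes for $1\le k+1\le n$ by the Szeg\H{o} orthogonality of $\psi_n^*$ to $z,z^2,\ldots,z^n$, which is a direct reformulation of the orthogonality of $\psi_n$ to $1,z,\ldots,z^{n-1}$.

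For (b), I argue by contradiction. If $\psi_n(0)=0$ then $\deg Q_n\le n-1$, and by (a) $Q_n$ would be $A_w^2$-orthogonal to every polynomial of degree $<n$, hence to itself, forcing $Q_n\equiv 0$. But the ODE $y'+y=0$ has only the solutions $y=Ce^{-z}$, none of which is a nonzero polynomial; since $\psi_n^*(0)=1$, this is a contradiction. Thus $\psi_n(0)\neq 0$, so $Q_n$ has leading coefficient $\overline{\psi_n(0)}\neq 0$, and $Q_n/\overline{\psi_n(0)}$ must coincide with $\gamma_n^{-1}p_n$.

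The only delicate piece is the Stokes' theorem computation in (a); the orientation and weight conversions on $\T$ have to be handled with care, but no deep obstacle lies beyond that. Once the identity $(\psi_n^*+{\psi_n^*}')e^z=(\psi_n^*e^z)'$ is noted and the boundary integral is massaged into a standard Szeg\H{o} inner product, (a) and hence the full claim fall out essentially immediately.
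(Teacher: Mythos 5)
Your proposal is correct and follows essentially the same route as the paper: the identity $\bigl(\psi_n^*(z)e^z\bigr)'=\bigl(\psi_n^*+{\psi_n^*}'\bigr)e^z$ combined with Green's/Stokes' formula to convert the Bergman inner product into the Szeg\H{o} inner product $\frac{1}{2\pi}\oint_{\T}\psi_n^*(z)\,\cj{z^{k+1}}\,w(z)\,|dz|$, followed by uniqueness of the monic orthogonal polynomial. Your step (b) merely makes explicit the ODE argument ($y+y'=0$ forces $y=Ce^{-z}$, never a nonzero polynomial) that the paper leaves implicit in its remark that $\psi_n^*$ is not identically zero.
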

\begin{proof} 
Let $\phi_n$ denote the numerator of the fraction that occurs  in \eqref{relationshippolys}.  With the aid of Green's Formula \cite[p. 10]{Gaier}, we find that 
	\begin{align}\label{orthocondforphi}
		\begin{split}
		\int_{\D}\cj{\phi_n(z)} z^m|e^z|^2d\sigma(z)={} &\int_{\D }\cj{\left(e^z\psi^*_n(z)\right)'} z^me^zd\sigma(z)\\
		= {} &\frac{1}{2\pi i}\int_{|z|=1}\cj{e^z\psi^*_n(z)}z^me^zdz\\
		={}& \frac{1}{2\pi}\int_{|z|=1}\cj{\psi^*_n(z)}z^{m+1}|e^z|^2|dz|\,.
\end{split}	
\end{align}

Since the monic polynomial $\psi_n$ is characterized by the orthogonality property that the last integral in \eqref{orthocondforphi} equals zero for all $0\leq m\leq n-1$, it follows that for these values of $m$, $\phi_n$ is orthogonal to $z^m$ over $\D$ with respect to the weight $|e^z|^2$. This implies that $\phi_n$  is of degree $n$, unless $\phi_n$ is identically zero. The latter is impossible because $\psi^*_n$ is not the constant zero. Since $\phi_n$ is then of degree $n$, so is $\psi_n^*$, so that $\psi_n(0)\not =0$, and thus the right-hand side of \eqref{relationshippolys} (which equals $\phi_n/\cj{\psi_n(0)}$) is precisely the monic Bergman orthogonal polynomial $\gamma_n^{-1}p_n$.
\end{proof}

By virtue of \eqref{relationshippolys} and the results of $\cite{andrei}$, $w(z)=|e^z|^2$ is another example of a weight on $\D$ for which a full asymptotic expansion for $p_n$ can be derived.

\subsection{Case $s\geq 1$}
We will assume that 
\[
\rho_a:=\max_{1\leq k\leq s}|a_k|>0,
\]
for otherwise $s=1$ and $a_1=0$, in which case the asymptotic analysis of the integral in \eqref{polymainformula} is practically the same as for $s=0$. 

The weight  
\begin{align}\label{bsweights}
w(z)=\frac{\prod_{k=1}^s|z-a_k|^{m_k}}{\prod_{k=1}^s|1-z\cj{a}_k|^{m_k+r_k}}
\end{align}
with $r_k\in \R$ and $m_k\in (-2,\infty)\setminus\{0\}$, serves as a good prototype for illustrating a few situations that might arise. This weight can be written in the form \eqref{defweightw} with 
\[
v(z)=\prod_{k=1}^s(1-z\cj{a}_k)^{-r_k/2}, \quad v^*(z)=\prod_{k=1}^s\left(\frac{z-a_k}{z}\right)^{r_k/2},
\]
the branches of the powers being chosen so that $v(0)=1$.

If every $r_k$ is a positive, even integer, then $v^*/q$ is analytic in $|z|>0$, so that $\rho_w=0$. Set \[
 d:=\frac{1}{2}\sum_{k=1}^sr_k,\quad p(z):=\prod_{k=1}^s\left(z-a_k\right)^{r_k/2}\,.
\]

From  \eqref{strongasymptoticformula} and Hurwitz's Theorem, we see that as $n\to\infty$, $n-d$ zeros of $p_n$ approach the origin, while each $a_k$ attracts $r_k/2$ zeros, multiplicities being counted. But we can be more precise, for in this case the integral in  \eqref{polymainformula} can be computed directly by using the residue theorem. For  $n+1\geq d$, the computation simplifies into   
\begin{align}\label{bspolys}
	\begin{split}
	\gamma_np_n(z)={} &(n+1-d)z^{n-d}p(z)(1+H_n(z))+z^{n+1-d}p(z)H'_n(z)\\
	&+z^{n+1-d}(1+H_n(z))\frac{p(z)}{2}\sum_{k=1}^s\frac{(m_k+r_k)(1-|a_k|^2) }{(z-a_k)(1-z\cj{a}_k)}\,.
	\end{split}
\end{align}

 It follows from \eqref{bspolys} that for $n\geq d$,
\[
\gamma_np_n(z)=\gamma_n^2z^{n-d}\left(\prod_{k=1}^s(z-a_k)^{\frac{r_k}{2}-1}\right)\prod_{k=1}^s(z-z_{n,k})\,,
\] 
with $z_{n,k}\to a_k$ as $n\to\infty$. 

By substituting this expression for $\gamma_np_n$ back into \eqref{bspolys} and simplifying, we get (with $q$ as in \eqref{defqandq*} and $q_k=q/(z-a_k)$) 
\begin{align}\label{identity}
	\begin{split}
		\gamma_n^2\prod_{k=1}^s(z-z_{n,k})={} &(n+1-d)q(z)(1+H_n(z))+zq(z)H'_n(z)\\
		&+\frac{z(1+H_n(z))}{2}\sum_{k=1}^s\frac{q_k(z)(m_k+r_k)(1-|a_k|^2)}{1-z\cj{a}_k}\,.
	\end{split}
\end{align}

Evaluating at $z=a_j$, we obtain 
\begin{align*}
	\gamma_n^2(a_j-z_{n,j})\underset{k\not=j}{\prod_{1\leq k\leq s}}(a_j-z_{n,k})={} &\frac{a_j(m_j+r_j)}{2}(1+H_n(a_j))\underset{k\not=j}{\prod_{1\leq k\leq s}}(a_j-a_k).
\end{align*}
This and \eqref{gammanformula} shows that $|a_j-z_{n,j}|=O(1/n)$ as $n\to\infty$, which leads  us to conclude that for all $1\leq j\leq s$,
\begin{align*}
z_{n,j}= a_j-\frac{a_j(m_j+r_j)}{2n}+O(n^{-2})\quad (n\to\infty). 
\end{align*}

Since $H_n(0)=0$, evaluating \eqref{identity} at $0$ results in
\begin{align*}
	\begin{split}
		\gamma_n^2={} &(n+1-d)\prod_{k=1}^s\frac{a_k}{z_{n,k}}\\
		={} &(n+1-d)\left(1+\frac{1}{2n}\sum_{k=1}^s(m_k+r_k)+O(n^{-2})\right)\,.
	\end{split}
\end{align*}

Notice the resemblance of $p_n$ to the so-called Bernstein-Szeg\H{o} polynomial $z^{n-d}\prod_{k=1}^s(z-a_k)^{\frac{r_k}{2}}$, $n\geq d$, which  is the $n$th degree monic orthogonal polynomial on the unit circle for the weight $\prod_{k=1}^s|1-z\cj{a}_k|^{-r_k}$ obtained by restricting \eqref{bsweights} to $\T$.  

If every $r_k$ is an even, non-positive integer, then $v^*/q$ has for singularities a pole at each  $a_k$, and so $\rho_w=\rho_a$. These poles turn out to be simple if we choose $r_k=0$ for all $k$, in which case the integral in \eqref{polymainformula} reduces to 
\begin{align}\label{anotherintegral}
	\gamma_np_n(z)={} &\frac{1}{2\pi i}\int_{|\zeta|=r}L(z,\zeta)\zeta^{n+1}(1+H_n(\zeta)
	)d\zeta\,.
\end{align}

By a residue calculation, we find from  \eqref{anotherintegral} that if $|z|< 1$ and  $z\not=a_k$ for all $1\leq k\leq s$, then
\begin{align*}
	\gamma_np_n(z)={} &(n+1)z^{n}(1+H_n(z))+z^{n+1}H'_n(z)\\
	&+z^{n+1}(1+H_n(z))\sum_{k=1}^s\frac{\frac{m_k}{2}(1-|a_k|^2) }{(z-a_k)(1-z\cj{a}_k)}\\
	&+	\sum_{k=1}^sa_k^{n+1}(1+H_n(a_k))\left(\frac{\frac{m_k}{2}(1-|a_k|^2) }{(a_k-z)(1-z\cj{a}_k)}+\frac{J(z,a_k)}{q^*(z)q'(a_k)}\right).
\end{align*}

This implies that \eqref{strongasymptoticformula} holds uniformly on any closed subset of $|z|\geq \rho_a$ that contains no $a_k$, while 
\begin{align*}
	p_n(z)={} &	\sum_{k:|a_k|=\rho_a}a_k^{n+1}\left(\frac{\frac{m_k}{2}(1-|a_k|^2) }{(a_k-z)(1-z\cj{a}_k)}+\frac{J(z,a_k)}{q^*(z)q'(a_k)}\right)+O(\rho_a^n/n)
\end{align*}
uniformly as $n\to\infty$ on compact subsets of $|z|<\rho_a$.

For $z=a_j$ with $|a_j|=\rho_a$, a similar residue calculation gives
\[
p_n(a_j)=(1+m_j/2)a_j^{n}(1+O(1/n))\,.
\]

If for some $k$, the number $r_k$ is not an even integer, then $v^*/q$ has a branch point at $a_k$, and the asymptotic analysis can be carried out   with the ideas sketched in Subsection \ref{cases=0}. 

\section{Reproducing kernel}\label{reproducingkernelsection}

For the inner product \eqref{innerproductw} corresponding to a weight $w$ on $\D$, let us set $\|f\|_w:=\sqrt{\langle f,f\rangle_w}$. The Bergman space $A^2_w$ is the linear space of functions  $f\in \caliH(\D)$ such that $\|f\|_w<\infty$, endowed with the inner product \eqref{innerproductw}. We will suppress the index $w$ and simply write $\|\cdot\|$ and $A^2$ for  $w\equiv 1$. 

Suppose that a weight $w$ is such that for some constants $0<\delta<1$ and $m>0$,  it is true that $w(z)\geq m$ whenever $\delta\leq |z|<1$. Then, with
\[
C_{w}:=1+\frac{4\int_{|z|\leq \delta}w(z)d\sigma(z)}{m(1-\delta)^2},
\]
we have
\begin{align}\label{subharmonicineq}
\int_{\D}|f(z)|^2w(z)d\sigma(z)\leq C_{w} \int_{\delta<|z|<1}|f(z)|^2w(z)d\sigma(z)\,,\quad f\in \caliH(\D).
\end{align}

We verify this inequality by closely following the argument presented at the end of Page 2447 of \cite{SSST}. For $f\in \caliH(\D)$, let $z_0$ be a point in the circle $|z|=(\delta+1)/2$ such that $\max_{|z|\leq (\delta+1)/2}|f(z)|=|f(z_0)|$. By the subharmonicity of $|f(z)|^2$ and the fact that the disk  $|z-z_0|<(1-\delta)/2$ is contained in the annulus $\delta<|z|<1$, we have 
\begin{align*}
|f(z_0)|^2\leq{} & \frac{4}{(1-\delta)^2}\int_{|z-z_0|<\frac{1-\delta}{2}}|f(z)|^2d\sigma(z)\\
\leq{} & \frac{4}{m(1-\delta)^2}\int_{\delta<|z|<1}|f(z)|^2w(z)d\sigma(z).
  \end{align*}
From this inequality and the Maximum Modulus Principle, we obtain \eqref{subharmonicineq} at once:
\begin{align*}
		\int_{\D}|f(z)|^2w(z)d\sigma(z)={} & 	\int_{|z|\leq \delta}|f(z)|^2w(z)d\sigma(z)+	\int_{\delta<|z|<1}|f(z)|^2w(z)d\sigma(z)\\
	\leq {} &|f(z_0)|^2\int_{|z|\leq \delta}w(z)d\sigma(z)+	\int_{\delta<|z|<1}|f(z)|^2w(z)d\sigma(z)\\ \leq{} & C_{w}\int_{\delta<|z|<1}|f(z)|^2w(z)d\sigma(z).
\end{align*}

From this point onward, $w$ will represent a weight of the form \eqref{defweightw}, and $(p_n)$ is the corresponding sequence of orthonormal polynomials satisfying \eqref{orthoconditions}. A sequence of functions that converges uniformly on every compact subset of an open set $U$ will be said to be normally convergent in $U$, and this type of convergence will be called normal convergence. 

If $\delta$ is such that $\max_{1\leq k\leq s}|a_k|<\delta< 1$, then there are positive constants $m$ and $M$ such that $m\leq w(z)\leq M$ whenever $\delta\leq |z|<1$. This fact and the inequality \eqref{subharmonicineq} can be utilized to easily deduce that the norms $\|\cdot\|$ and $\|\cdot\|_w$ are equivalent on $\caliH(\D)$, and therefore,  $A^2$ and $A^2_w$ contain the same members. Moreover, it is now easy to  transfer some of the properties of $A^2$ to $A^2_w$. For instance, since $A^2$ is a Hilbert space \cite[p. 8]{DSch}, so is $A^2_w$. We also know that the polynomials are dense in $A^2$ \cite[p. 11]{DSch}, and convergence in $A^2$ implies normal convergence in $\D$, so that the same is true in $A^2_w$, and consequently, we have the (generalized) Fourier expansion identity
\begin{align}\label{fourierconvergence}
	f(z)=\sum_{k=0}^\infty\langle f,p_n\rangle_wp_n(z),\quad f\in A^2_w,
\end{align} 
with the series converging normally in $\D$.

More can actually be said. If $0\leq \varrho(f)\leq 1$ denotes the smallest number such that $f$ admits an analytic continuation to $|z|<1/\varrho(f)$, then   
\begin{align*}
\limsup_{n\to\infty}|\langle f,p_n\rangle_w|^{1/n}=\varrho(f)
\end{align*}
and the normal convergence of \eqref{fourierconvergence} extends to the disk $|z|<1/\varrho(f)$. This stronger statement was proven by Walsh \cite[pp. 130-131]{Walsh} for strictly positive continuous weights over Jordan domains, but the proof still applies (with only minor modifications) to continuous weights that are positive near the boundary of the domain (see also \cite[p. 336]{PS}). 

Because point evaluation functionals are bounded in $A^2$ (see, e.g. \cite[Ch. 1]{DSch}), the same is true in $A^2_w$, and there exists a reproducing kernel $K_w(z,\zeta)$ defined on $\D\times\D$,  analytic in $z$ and $\cj{\zeta}$, such that  for all $f\in A^2_w$ and $z\in \D$,
\begin{align*}
	f(z)=\int_\D f(\zeta)K_w(z,\zeta)w(\zeta)d\sigma(\zeta).
\end{align*}
This kernel has the symmetry property $K(z,\zeta)=\cj{K(\zeta,z)}$, so that the variables can be interchanged at will.

Similarly, the space $A^2_h$ corresponding to the weight 
\[
h(z)=\prod_{k=1}^s\left|\frac{z-a_k}{1-\cj{a}_kz}\right|^{m_k}
\]
has a reproducing kernel $K_h(z,\zeta)$, and by direct integration one can verify that 
\begin{align}\label{kwandkh}
	K_w(z,\zeta)=\frac{K_{h}(z,\zeta)}{v(z)\cj{v(\zeta)}}.
\end{align}

If we now let $\tau_\zeta$ denote, for $\zeta\in \D$, the smallest number such that $K(z,\zeta)$, as a function of $z$, has an analytic continuation to $|z|<1/\tau_\zeta$, then by \eqref{fourierconvergence} we see that for every fixed $\zeta\in \D$,
\begin{align}\label{kernelexpansionpolynomials}
	\frac{K_{h}(z,\zeta)}{v(z)\cj{v(\zeta)}}=\sum_{k=0}^\infty p_k(z)\cj{p_k(\zeta)},\quad |z|<1/\tau_\zeta,
\end{align}
with the convergence of the series being uniform on every disk $|z|\leq r$ of radius $r<1/\tau_\zeta$. Moreover, 
\[
\limsup_{n\to\infty}|p_n(\zeta)|^{1/n}=\tau_\zeta, \quad \zeta\in \D.
\] 

\begin{proposition}For all but finitely many $\zeta\in\D$, 
	\[
	\tau_\zeta= \max\{|\zeta|, \rho_w\}.
	\]
\end{proposition}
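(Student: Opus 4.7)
The plan is to combine \eqref{kwandkh} with the defining identity $L(z,\eta)=\eta^{-2}K_h(z,1/\cj{\eta})$ to get, for $\zeta\in\D\setminus\{0\}$,
\begin{equation*}
K_w(z,\zeta)\;=\;\frac{L(z,1/\cj{\zeta})}{\cj{\zeta}^{2}\,v(z)\,\cj{v(\zeta)}}\,.
\end{equation*}
Since the left-hand side agrees near the origin with $\sum_kp_k(z)\cj{p_k(\zeta)}$, this formula is the relevant analytic continuation in $z$, so $\tau_\zeta$ is the reciprocal of the distance from the origin to the nearest non-removable singularity of the right-hand side in $z$.

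From the rational representations \eqref{firstformulaforL} and \eqref{secondformulaforL}, the candidate singularities in $z$ for fixed $\zeta\in\D\setminus\{0\}$ lie in three sets: the single point $z=1/\cj{\zeta}$ at distance $1/|\zeta|$; the zeros of $q^*(z)$, i.e.\ the points $1/\cj{a}_k$ with $a_k\neq0$; and the zeros of $v$ in its analytic continuation. The last two sets are exactly the singularities of $1/(vq^*)$, whose nearest member to the origin lies at distance $1/\rho_w$ by the definition of $\rho_w$. Hence the nearest candidate singularity of $K_w(\cdot,\zeta)$ is at distance $1/\max\{|\zeta|,\rho_w\}$, giving already $\tau_\zeta\leq\max\{|\zeta|,\rho_w\}$.

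For the reverse inequality I would split into cases. When $|\zeta|>\rho_w$, the nearest candidate is $z=1/\cj{\zeta}$; by \eqref{secondformulaforL}, the term $(1/\cj{\zeta}-z)^{-2}$ produces a pole of order exactly two there, while every other summand has at most a simple pole at this point, and division by $v(z)$ (which is nonzero there, or at worst deepens the singularity) cannot remove the double pole. Thus $\tau_\zeta=|\zeta|$ with no exceptions in this range. When $|\zeta|\leq\rho_w$, the nearest candidates are the finitely many points $z_0$ of modulus $1/\rho_w$ that are either of the form $1/\cj{a}_j$ with $|a_j|=\rho_w$ or zeros of $v$; for the pole at each such $z_0$ of $L(z,1/\cj{\zeta})/v(z)$ to be removable is a single analytic condition on $\zeta$: the vanishing of $\operatorname{Res}_{z=z_0}L(z,1/\cj{\zeta})$ when $z_0=1/\cj{a}_j$, or the vanishing of $L(z_0,1/\cj{\zeta})$ to sufficient multiplicity when $v(z_0)=0$.

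The main obstacle is to show that each such removability condition is met by only finitely many $\zeta\in\D$; equivalently, that the associated function of $\eta:=1/\cj{\zeta}$ is not identically zero. This I would read directly from \eqref{secondformulaforL}. Assuming the $a_k$ are distinct (the general case is analogous), the $k=j$ summand contributes to $\operatorname{Res}_{z=1/\cj{a}_j}L(z,\eta)$ the rational function $\frac{(m_j/2)(1-|a_j|^2)}{-\cj{a}_j(\eta-1/\cj{a}_j)(\eta-a_j)}$, which has a simple pole at $\eta=1/\cj{a}_j$ with residue $-m_j/2\neq 0$, while no other contribution has a pole at $\eta=1/\cj{a}_j$ (their poles lie at $\eta=a_k$ with $|a_k|<1$); and for a zero $z_0$ of $v$, the term $(\eta-z_0)^{-2}$ gives $L(z_0,\eta)$ a nonzero double pole at $\eta=z_0$. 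Each removability condition is therefore a proper zero set of a nonzero rational function in $\eta$, hence finite. Taking the union of these finitely many finite sets and adjoining $\zeta=0$ yields the desired finite exceptional set, outside of which $\tau_\zeta=\max\{|\zeta|,\rho_w\}$.
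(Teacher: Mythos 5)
Your overall strategy---read $\tau_\zeta$ off a formula for $K_w(\cdot,\zeta)$ that is rational in $z$ up to the factor $1/v(z)$, and reduce the exceptional $\zeta$ to zero sets of explicit nonzero rational functions---is the same as the paper's, and your nondegeneracy computations (the simple pole of the residue function at $\eta=1/\cj{a}_j$ with residue $-m_j/2$, the double pole of $L(z_0,\cdot)$ at $\eta=z_0$) are the analogues of the paper's verification that $\Pi(z_0,\cdot)\not\equiv 0$ in \eqref{formofPi}. But there is a genuine structural gap: your claim that the zeros of $q^*$ together with the zeros of the continued $v$ are \emph{exactly} the singularities of $1/(vq^*)$ is unjustified in both directions. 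The function $v$ is only given analytic in $\cj{\D}$; all that is known beyond is that $1/(vq^*)$ continues to $|z|<1/\rho_w$, and its singularities on the circle $|z|=1/\rho_w$ need not be isolated or polar at all---branch points, essential singularities, even a natural boundary are allowed---in which case your removability dichotomy (``vanishing of $\operatorname{Res}_{z=z_0}L$ when $z_0=1/\cj{a}_j$, or vanishing of $L(z_0,1/\cj{\zeta})$ to sufficient multiplicity when $v(z_0)=0$'') has nothing to attach to. Conversely, a zero of $q^*$ need not be a singularity of $1/(vq^*)$: for $v(z)=(1-\cj{a}z)^{-1}$ with $s=1$, $a_1=a\neq 0$, one has $vq^*\equiv 1$, so $\rho_w=0<|a_1|$, and your ``nearest candidate lies at distance $1/\rho_w$'' bookkeeping fails even for the upper bound; it is rescued only by noting that $1/v=q^*\cdot(vq^*)^{-1}$ vanishes at $1/\cj{a}_1$, i.e., by keeping $vq^*$ together, as the paper does in \eqref{mainkernelformula}. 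The paper's key maneuver, which your proposal lacks, is contrapositive: \emph{assume} $\tau_\zeta<\rho_w$; then $1/(vq^*)=K_w(\cdot,\zeta)\,(1-z\cj{\zeta})^2\cj{q^*(\zeta)}\,\cj{v(\zeta)}/\Pi(\cdot,\zeta)$ is meromorphic in a disk strictly larger than $|z|<1/\rho_w$, which \emph{forces} the circle singularities to be finitely many poles a posteriori and yields the clean necessary condition $\Pi(z_0,\zeta)=0$ at any one of them; finiteness of the exceptional set then follows from $\Pi(z_0,\cdot)\not\equiv 0$. Without this step your argument covers only weights whose continued $v$ has polar boundary behavior.

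The second gap is your assertion that for $|\zeta|>\rho_w$ there are \emph{no} exceptions because division by $v(z)$ ``is nonzero there, or at worst deepens the singularity.'' This is backwards: dividing by $v$ multiplies by $1/v$, whose zeros in $|z|<1/\rho_w$ (poles of the meromorphic continuation of $v$; zeros of $v$ there are impossible, since they would contradict the analyticity of $1/(vq^*)$) lower, and can kill, the double pole at $z=1/\cj{\zeta}$. Concretely, for $s=0$ and $v(z)=(2-z)^{-2}$ one has $\rho_w=0$ and, by \eqref{kwandkh}, $K_w(z,1/2)=(2-z)^2\,(3/2)^2\,(1-z/2)^{-2}\equiv 9$, so $\tau_{1/2}=0<1/2$: a genuine exception inside your ``no exceptions'' range. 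Likewise, at $\zeta=a_k$ your claim that every summand other than $(\eta-z)^{-2}$ has at most a simple pole at $z=\eta=1/\cj{a}_k$ fails: in \eqref{secondformulaforL} the $k$th middle term has both $(\eta-z)$ and $(1-z\cj{a}_k)$ vanishing at that point (a double pole), and the $J$-term blows up through $q^*(z)$---this degeneration is precisely why the paper flags $\zeta=a_k$ as a possible exception in this regime rather than claiming none. Your final finite exceptional set could survive if these finitely many points were adjoined, but the intermediate claims as written are false, and in the non-meromorphic case the argument does not go through at all.
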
 

\begin{proof} To prove this proposition we make use of a result from \cite{MD1}, where it has been shown that $K_h(z,\zeta)$ is a rational function in $z$ and $\cj{\zeta}$. As a function of $z$ (and for fixed $\zeta\in \D$) it has  a double pole at $1/\cj{\zeta}$ and a simple pole at $\cj{a}^{-1}_k$, $1\leq k\leq s$. More precisely, from the representation in \cite[Thm. 1.1]{MD1} and \eqref{kwandkh} above, we find that $K_w(z,\zeta)$ can be written as
	\begin{align}\label{mainkernelformula}
	K_{w}(z,\zeta)={}	&\frac{\Pi(z,\zeta)}{(1-z\cj{\zeta})^2q^*(z)v(z)\cj{q^*(\zeta)}\cj{v(\zeta)}},
\end{align}
the numerator having the structure  
\begin{align}\label{formofPi}
	\begin{split}
	\Pi(z,\zeta)={}&\cj{q^*(\zeta)}q^*(z)+(1-z\cj{\zeta})\sum_{k=1}^s\frac{m_k}{2}(1-|a_k|^2)\cj{q^*_k(\zeta)}q_k^*(z)\\
	&+(1-z\cj{\zeta})^2\Pi_1(z,\zeta)\,,
	\end{split}
\end{align}
where
\[
q_k(z):=\underset{j\not=k}{\prod_{1\leq j\leq s}}(z-a_j),\quad q^*_k(z):=\underset{j\not=k}{\prod_{1\leq j\leq s}}(1-\cj{a}_jz)\,,
\]
and $\Pi_1(z,\zeta)$ is  a polynomial in $z$ and $\cj{\zeta}$.  

Suppose first that $\rho_w<|\zeta|<1$. Since $\rho_w$ is the smallest number such that the function $(v(z)q^*(z))^{-1}$ is analytic in $|z|<{1/\rho_w}$, \eqref{mainkernelformula} implies that  $K_w(\cdot,\zeta)$ is analytic in the punctured disk $\{z:|z|<1/\rho_w,\ z\not=1/\cj{\zeta}\}$, with a pole at $z=1/\cj{\zeta}$ (and therefore $\tau_\zeta=|\zeta|$) except possibly when $\zeta=a_k$ for some $1\leq k\leq s$. 

Assume now that $|\zeta|\leq \rho_w$, so that $K_w(\cdot,\zeta)$ is analytic in $|z|<1/\rho_w$, and consequently, $\tau_\zeta\leq \rho_w$. If $\tau_\zeta<\rho_w$, then again by \eqref{mainkernelformula}, $(vq^*)^{-1}$ has to be a meromorphic function in $|z|<\tau_\zeta$, and therefore it has only finitely many singularities (all of them poles) on the circle $|z|=1/\rho_w$. Moreover, at any one of these singularities $z_0$, we must have $\Pi(z_0,\zeta)=0$. Then, to finish the proof it suffices to show that the polynomial $\Pi(z_0,\zeta)$ in the variable $\cj{\zeta}$ is not constant zero, for this will let us conclude that for only finitely many $\zeta$ it is possible to have $\tau_\zeta<\rho_w$. 

From \eqref{formofPi}, we see that if $z_0$ is not a zero of $q^*$, then $$\Pi(z_0,1/\cj{z}_0) =\cj{q^*(1/\cj{z}_0)}q^*(z_0)\not=0.$$ If $q^*(z_0)=0$, say $z_0=1/\cj{a}_k$, then 
\[
\left.\frac{\partial}{\partial \cj{\zeta}}\Pi(z_0,\zeta)\right|_{\zeta=1/\cj{z}_0}=-\frac{z_0m_k}{2}(1-|a_k|^2)\cj{q^*_k(1/\cj{z}_0)}q_k^*(z_0)\not=0.
\]

\end{proof}

 \section{Preliminary results} 

\subsection{Estimates for the Faber polynomials}

By the definition of $\rho_w$, the function $v^*/q$ is analytic in $|z|>\rho_w$, and so is therefore $v^*$.

The Laurent expansion of $v^*$ in $|z|>\rho_w$ is of the form 
\[
v^*(z)=c_0+\frac{c_1}{z}+\frac{c_2}{z^2}+\cdots,\quad c_0=v(0)^{-1},
\]
so that for every integer $n\geq 0$, 
\[
z^nv^*(z)=c_0z^n+c_1z^{n-1}+c_2z^{n-2}+\cdots+c_n+\frac{c_{n+1}}{z}+\frac{c_{n+2}}{z^2}+\cdots.
\]

The polynomial part 
\[
F_n(z):=c_0z^n+c_1z^{n-1}+c_2z^{n-2}+\cdots+c_n
\]
is known as the Faber polynomial for $z^nv^*(z)$ (see, e.g., \cite[Ch. II, \S 4]{Suetin1}).  Note that   
\begin{align}\label{FaberInfinityBehavior}
	F_n(z)=z^nv^*(z)+O(1/z)\quad (z\to\infty).
\end{align}

For every $\rho$ such that $\rho_w<\rho<1$, we also have \cite[Ch. II, \S 4, Eq. 4]{Suetin1} 
\begin{equation*}
F_n(z)=z^nv^*(z)+\frac{1}{2\pi i}\int_{|\zeta|=\rho}\frac{\zeta^nv^*(\zeta)}{\zeta-z}d\zeta, \quad |z|> \rho,
\end{equation*}
and thus if  $\rho_w<\rho_1<\rho<1$, then as $n\to \infty$
\begin{align}\label{FaberBound2}
F_n(z)=z^nv^*(z)+O(\rho_1^n), \quad |z|\geq \rho.
\end{align}

\begin{lemma}\label{estimateofgrowthinannulus}Suppose  that $S(z)$ is a function analytic in some open set containing the annulus $r\leq |z|\leq 1$, and that $|S(z)|=1$ when $|z|=1$.  Then, there exists a constant $M$ such that 
\[
|1-|S(z)|^2|\leq M(1-|z|),\quad r\leq |z|\leq 1.
\]	
\begin{proof} 
Let 
 \[
 M_1:=\max_{r\leq |z|\leq 1}|S(z)|,\quad M_2:=\max_{r\leq |z|\leq 1}|S'(z)|.
\]	

For all $z$ is the annulus $r\leq |z|\leq 1$, the segment $[z,z/|z|]$ is also contained in that annulus, so that
\begin{align*}
	\left|1-|S(z)|^2\right|= {} &(1+|S(z)|)\left||S(z/|z|)|-|S(z)|\right|\\
	\leq {}&(1+M_1)\left|S(z/|z|)-S(z)\right|\\
	\leq {} &(1+M_1)\left|\int_{[z,z/|z|]}S'(\zeta)d\zeta\right|	\leq  (1+M_1)M_2(1-|z|).
\end{align*}
\end{proof}

	\end{lemma}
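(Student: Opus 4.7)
My plan is to factor
\[
1 - |S(z)|^2 = \bigl(1 - |S(z)|\bigr)\bigl(1 + |S(z)|\bigr)
\]
and control each factor separately on the annulus $r\leq|z|\leq 1$. The second factor is bounded above by $1 + M_1$, where $M_1 := \max_{r\leq|z|\leq 1}|S(z)|$ is finite by continuity of $S$ on this compact set.

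For the first factor, I would exploit the boundary condition $|S|\equiv 1$ on $\T$ by comparing the value at $z$ to the value at its radial projection $z/|z|\in\T$. Writing
\[
\bigl|\,1 - |S(z)|\,\bigr| = \bigl|\,|S(z/|z|)| - |S(z)|\,\bigr| \leq \bigl|S(z/|z|) - S(z)\bigr|,
\]
the problem reduces to estimating the variation of $S$ along a radial segment of length $1-|z|$.

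Since $S$ is analytic on an open neighborhood of the closed annulus, $S'$ is continuous there, so $M_2 := \max_{r\leq|z|\leq 1}|S'(z)|$ is finite. The radial segment $[z,z/|z|]$ lies entirely in the annulus (which is radially convex), so the fundamental theorem of calculus yields
\[
\bigl|S(z/|z|) - S(z)\bigr| = \left|\int_{[z,z/|z|]} S'(\zeta)\,d\zeta\right| \leq M_2(1-|z|).
\]
Combining the two bounds produces the desired inequality with $M := (1+M_1)M_2$.

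I do not anticipate a real obstacle here; the only point requiring a moment of thought is that the radial segment stays inside the region of analyticity of $S$, which is automatic. Note in particular that nothing in this scheme requires $S$ to be zero-free on the annulus, so an alternative approach through $\log|S|^2$ and harmonic-function estimates (which would need $S\neq 0$) is unnecessary. The argument is a routine combination of factoring, the reverse triangle inequality, and a mean-value estimate along a radial segment.
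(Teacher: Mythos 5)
Your proposal is correct and follows essentially the same route as the paper's own proof: both factor $1-|S(z)|^2$, use the boundary condition to replace $1$ by $|S(z/|z|)|$, apply the reverse triangle inequality, and bound $|S(z/|z|)-S(z)|$ by integrating $S'$ along the radial segment $[z,z/|z|]$, arriving at the same constant $M=(1+M_1)M_2$. No gaps; your added remarks (radial convexity of the annulus, no need for $S$ to be zero-free) are accurate but not points where the paper's argument differs.
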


\begin{proposition}As $n\to\infty$,
\begin{align}\label{normoffaberpolys}
\int_{\D}|F_n(z)|^2w(z)d\sigma = {} &\frac{1}{n}\left(1+O\left(n^{-1}\right)\right)\,.
\end{align}
\end{proposition}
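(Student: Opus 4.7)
My plan is to replace $F_n$ by its leading asymptotic piece $z^n v^*(z)$ on a thin annulus near $\T$, bound the contribution from the inner disk using the smallness of the polynomial there, and exploit the fact that the reduced integrand's angular mean equals $1$ on the boundary. Concretely, I would fix $\rho_1$ and $\rho$ with $\max\{\rho_w,\,\max_k|a_k|\}<\rho_1<\rho<1$. By \eqref{FaberBound2}, $F_n(z)=z^n v^*(z)+R_n(z)$ with $R_n(z)=O(\rho_1^n)$ uniformly for $|z|\geq \rho$. Since $F_n$ is a polynomial of degree $n$, the maximum principle combined with this same estimate gives $|F_n(z)|=O(\rho^n)$ on $|z|\leq \rho$, so that $\int_{|z|\leq\rho}|F_n|^2 w\,d\sigma=O(\rho^{2n})$. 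On $\rho<|z|<1$, expanding $|F_n|^2=|z^nv^*|^2+2\Re(\cj{z^nv^*}R_n)+|R_n|^2$ and using $\int_\D w\,d\sigma<\infty$ leaves only $O(\rho_1^n)$ errors from the last two terms.

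The main task is then to evaluate $I_n:=\int_{\rho<|z|<1}|z|^{2n}|v^*(z)|^2 w(z)\,d\sigma(z)$. Writing $w=|v|^2 B$ with $B(z)=\prod_{k=1}^s|b_k(z)|^{m_k}$ and $b_k(z)=(z-a_k)/(1-z\cj{a}_k)$, the integrand becomes $|z|^{2n}f(z)$ with $f:=|vv^*|^2\,B$. The key observation is that $f\equiv 1$ on $\T$: indeed $|vv^*|=1$ since $v^*(z)=1/\cj{v(z)}$ there, and $|b_k|=1$ because each $b_k$ is a M\"obius automorphism of $\D$. To quantify the boundary behavior of $f$, I would apply Lemma \ref{estimateofgrowthinannulus} to $vv^*$ (analytic on the annulus because $\rho>\rho_w$ and $v$ is analytic in $\cj{\D}$), obtaining $|vv^*|^2=1+O(1-|z|)$. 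The same lemma applied to each analytic factor $b_k$ yields $|b_k|^2=1+O(1-|z|)$, and a first-order Taylor expansion of $t\mapsto t^{m_k/2}$ at $t=1$ upgrades this to $|b_k|^{m_k}=1+O(1-|z|)$. Multiplying these estimates gives $f(z)=1+O(1-|z|)$ on the annulus.

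Converting to polar coordinates reduces $I_n$ to $2\int_0^1 r^{2n+1}\bar f(r)\,dr$, where $\bar f(r)$ denotes the angular mean of $f$ at radius $r$. The portion $r\in[0,\rho]$ contributes only $O(\rho^{2n})$. On $[\rho,1]$, the bound $\bar f(r)=1+O(1-r)$ combined with the elementary identities $2\int_0^1 r^{2n+1}\,dr=\frac{1}{n+1}$ and $2\int_0^1 r^{2n+1}(1-r)\,dr=\frac{1}{(n+1)(2n+3)}$ will yield $I_n=\frac{1}{n+1}+O(n^{-2})=\frac{1}{n}(1+O(n^{-1}))$, which together with the error estimates above gives the claim. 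The main obstacle is the non-analyticity of $B$: Lemma \ref{estimateofgrowthinannulus} does not apply directly to $B$, so I would descend to the analytic Blaschke factors $b_k$ and then reconstruct the fractional-power estimates via a Taylor expansion. This is a minor but essential step, since uniform control of the implicit constants on the annulus is precisely what drives the final $O(1/n)$ error bound.
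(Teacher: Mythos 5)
Your proof is correct and follows essentially the same route as the paper: the same split of the integral at $|z|=\rho$, the same use of \eqref{FaberBound2} together with the maximum modulus principle to dispose of the inner disk, and the same key Lemma \ref{estimateofgrowthinannulus} to show the reduced integrand is $1+O(1-|z|)$ on the annulus, finished by explicit radial moment computations. The only difference is one of packaging: the paper builds a single function $S$ with $|v^*(z)|^2w(z)=|z|^m|S(z)|^2$ (which requires choosing a single-valued branch of the fractional powers on the annulus), whereas you apply the lemma to $vv^*$ and to each Blaschke factor $b_k$ separately and then recover the exponents $m_k$ by a Taylor expansion of $t\mapsto t^{m_k/2}$ near $t=1$ --- a legitimate variant that sidesteps the branch-selection step, with the implicit constants under control because the $b_k$ are zero-free on the closed annulus.
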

\begin{proof}
For $\rho$ in the  range $\rho_w<\rho<1$, we can write
\begin{align*}
\int_{\D}|F_n(z)|^2w(z)d\sigma(z)={} &\int_{|z|\leq \rho}|F_n(z)|^2w(z)d\sigma(z)+\int_{\rho\leq|z|\leq 1}|F_n(z)|^2w(z)d\sigma(z).
\end{align*}

The first of these two summands can be estimated with the help of \eqref{FaberBound2} and the maximum modulus principle: 
\begin{align}\label{firstestimate}
\int_{|z|\leq \rho}|F_n(z)|^2w(z)d\sigma(z)= O(\rho^{2n})\qquad (n\to\infty).
\end{align}

The function 
\[
S(z):=\frac{v(z)v^*(z)}{\prod_{k=1}^s(1-z\cj{a}_k)^{m_k/2}} \prod_{k=1}^s\left(\frac{z-a_k}{z}\right)^{m_k/2}
\]
has a single-valued analytic branch in the annulus $\rho_w<|z|<1/\rho_w$, and $|S(z)|=1$ for $|z|=1$. Note also that  
\[
|v^*(z)|^2w(z)=|z|^{m}|S(z)|^2, \quad \rho_w<|z|<1/\rho_w\,,
\]
where $m:=\sum_{k=1}^sm_k$.  This equality and \eqref{FaberBound2} imply that  as $n\to\infty$,
\begin{align}\label{secondestimate}
\int_{\rho\leq|z|\leq 1}|F_n(z)|^2w(z)d\sigma(z) =\int_{\rho\leq|z|\leq 1}|z|^{2n+m}|S(z)|^2d\sigma(z)+O(\rho^{2n})\,.
\end{align}

 By Lemma \ref{estimateofgrowthinannulus}, there exists a constant $M$, depending only on $S$ and $\rho$, such that whenever $2n+2>m$,  
\begin{align*}
\int_{\rho\leq|z|\leq 1}&|z|^{2n+m}|S(z)|^2d\sigma(z)\\
&\leq \int_{\rho\leq|z|\leq 1}|z|^{2n+m}\left||S(z)|^2-1\right|d\sigma(z)+\int_{\rho\leq|z|\leq 1}|z|^{2n+m}d\sigma(z)\\
&\leq  M\int_{\rho\leq|z|\leq 1}|z|^{2n+m}(1-|z|^2)d\sigma(z)+\frac{1-\rho^{2n+m+2}}{n+1+m/2}\\
&=  M\left( \frac{1-\rho^{2n+m+2}}{n+1+m/2}- \frac{1-\rho^{2n+m+4}}{n+2+m/2}\right)+ \frac{1-\rho^{2n+m+2}}{n+1+m/2}\\
&=  \frac{1}{n+1+m/2}+O(n^{-2})\,. 
\end{align*}

Thus, we have proven that \eqref{secondestimate} is equal to  $n^{-1}(1+O(n^{-1}))$, and that \eqref{firstestimate} decreases geometrically fast, so that \eqref{normoffaberpolys} takes place. 
\end{proof}

The following observation is not about the Faber polynomials per se, but it will be needed soon in the next section.

Suppose that 
$E_n(z)=e_0+e_1z+\cdots+e_nz^n$ is a polynomial of degree $n$, and let $0\leq r<1$. Since 
\[
\int_{|z|=1}|E_n(z)|^2|dz|=2\pi \sum_{k=0}^n|e_k|^2,  
\] 
and
\[
\int_{r\leq |z|\leq 1}|E_n(z)|^2d\sigma(z)= \sum_{k=0}^n\frac{(1-r^{2k+2})|e_k|^2}{k+1},
\]
it follows that 
\begin{align} \label{norm-inequalities}
\int_{|z|=1}|E_n(z)|^2|dz|\leq \frac{2\pi (n+1)}{1-r^2} \int_{r\leq |z|\leq 1}|E_n(z)|^2d\sigma(z)\,.
\end{align}

\subsection{Behavior of $\gamma_n$ and related quantities} 
\begin{proposition} \label{prop-for-gammas} As $n\to\infty$,
\begin{align}\label{asymptforgamma_n}
 \gamma_n=c_0\sqrt{n}\left(1 +O\left(n^{-1}\right)\right).
\end{align}
\end{proposition}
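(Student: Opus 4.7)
To prove \eqref{asymptforgamma_n}, I would establish matching lower and upper bounds on $\gamma_n$. The lower bound $\gamma_n \geq c_0\sqrt{n}(1+O(1/n))$ is immediate from the extremal characterization of $p_n/\gamma_n$ as the monic polynomial of degree $n$ of least $\|\cdot\|_w$-norm: since $F_n/c_0$ is another monic polynomial of degree $n$, one gets $1/\gamma_n \leq \|F_n/c_0\|_w$, and combining with \eqref{normoffaberpolys} yields $\gamma_n \geq c_0/\|F_n\|_w = c_0\sqrt{n}(1+O(1/n))$.

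For the matching upper bound, I would expand $F_n$ in the orthonormal basis $\{p_k\}_{k=0}^n$ of $\mathcal{P}_n$ as $F_n = \sum_{k=0}^n \alpha_{n,k}p_k$, where $\alpha_{n,k} = \langle F_n, p_k\rangle_w$. Matching the coefficient of $z^n$, using $p_n \perp \mathcal{P}_{n-1}$ in $A^2_w$ together with $\langle z^n, p_n\rangle_w = 1/\gamma_n$, forces $\alpha_{n,n} = c_0/\gamma_n$. Parseval's identity then yields
$$\|F_n\|_w^2 \;=\; \frac{c_0^2}{\gamma_n^2} + \sum_{k=0}^{n-1}|\alpha_{n,k}|^2,$$
so the upper bound $\gamma_n \leq c_0\sqrt{n}(1+O(1/n))$ will follow, by \eqref{normoffaberpolys}, from the residual-sum estimate $\sum_{k<n}|\alpha_{n,k}|^2 = O(n^{-2})$.

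The main obstacle is establishing this residual-sum estimate. My plan is to split the integral $\alpha_{n,k} = \int_\D F_n\overline{p_k}w\,d\sigma$ at the circle $|z|=r$ for some $\rho_w < r < 1$. On the inner disk $|z|\leq r$, the maximum modulus principle applied to the representation $F_n = z^nv^* + O(\tilde\rho^n)$ from \eqref{FaberBound2} (valid on $|z|\geq r$ with $\rho_w<\tilde\rho<r$) forces $|F_n|=O(r^n)$ throughout, so the inner contribution is exponentially small uniformly in $k$. On the outer annulus $r<|z|<1$, I would replace $F_n$ by $z^nv^*$ (an exponentially small error) and rewrite the integrand using $w = v\overline{v}h$ and $vv^* = S$, where $S$ is analytic in $\rho_w<|z|<1/\rho_w$ and satisfies $|S|=1$ on $\T$; this reduces the leading contribution to $\int_{r<|z|<1}z^n S(z)\,\overline{v(z)p_k(z)}\,h(z)\,d\sigma(z)$. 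Applying Green's formula converts this area integral into contour integrals on $|z|=1$ and $|z|=r$, and the boundary identity $|S|=1$ together with the orthogonality of $p_k$ to polynomials of degree less than $k$ should yield the necessary cancellations. The delicate point is that estimating each $|\alpha_{n,k}|$ individually via Cauchy--Schwarz gives only $O(n^{-1/2})$ per term and a useless $O(1)$ summed bound, so the argument must exploit the collective near-orthogonality of $F_n$ to $\mathcal{P}_{n-1}$ rather than treating each coefficient in isolation.
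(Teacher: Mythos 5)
Your lower bound coincides with the paper's: extremality of the monic orthogonal polynomial tested against $F_n/c_0$, combined with \eqref{normoffaberpolys}. Your Parseval reduction is also correct as far as it goes: writing $\beta_{n,k}:=\langle F_n,p_k\rangle_w$ (note these are \emph{not} the $\alpha_{n,k}$ of \eqref{defalphas}, which are contour integrals over $\T$ -- the notational collision is unfortunate), one indeed has $\beta_{n,n}=c_0/\gamma_n$ and $\|F_n\|_w^2=c_0^2\gamma_n^{-2}+\sum_{k<n}|\beta_{n,k}|^2$, so the residual-sum estimate $\sum_{k<n}|\beta_{n,k}|^2=O(n^{-2})$ would finish. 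But that estimate is where the entire difficulty sits: by your own identity it is \emph{equivalent} to \eqref{asymptforgamma_n} once \eqref{normoffaberpolys} is known, so the reduction gains nothing unless the estimate is proved independently, and your sketch does not do so. Two concrete failure points: (i) for $s\geq 1$ the factor $h$ is not the boundary value or modulus-squared of anything analytic in the annulus -- the best available factorization is $h(z)=|z|^{m}|T(z)|^2$ with $T$ a single-valued analytic branch, and the leftover radial factor $|z|^m$ (more fundamentally, the non-analyticity of $h$) blocks the Green's-formula conversion of the area integral into contour integrals; (ii) even in the case $s=0$ where that conversion is legitimate, once on $\T$ the polynomials $p_k$ satisfy no orthogonality relation there: orthogonality on $\T$ with respect to $w|_\T=|v|^2$ characterizes the Szeg\H{o} polynomials, not the Bergman ones, so the ``necessary cancellations'' you invoke simply are not available, and there is no Parseval in the index $k$ with which to sum the squares. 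If one pushes your boundary terms with the best crude bounds at hand (e.g.\ $\int_\T|p_k|^2|dz|=O(k)$, which follows from \eqref{norm-inequalities}), the residual sum comes out $O(1)$, or $O(n^{-1})$ with optimistic coefficient-decay refinements -- short of the required $O(n^{-2})$ by a full factor of $n$. You honestly flag this gap yourself (``the argument must exploit the collective near-orthogonality\dots''), but flagging the missing cancellation is not supplying it; as written, the upper bound is unproven.

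For comparison, the paper's upper bound sidesteps every analyticity issue with the weight. Choose an even integer $2m_0\geq m_k$ for all $k$; since $|(z-a_k)/(1-\overline{a}_kz)|<1$ on $\D$, one has the pointwise minorization $w(z)\geq |v(z)q(z)^{m_0}/q^*(z)^{m_0}|^2$, hence $1=\|p_n\|_w^2\geq \int_\D |p_n q^{m_0} g|^2\,d\sigma$ with $g=v/(q^*)^{m_0}$ analytic in $\overline{\D}$ and $g(0)=c_0^{-1}$. On each circle $|z|=r$, after multiplying by $e^{-i(n+sm_0)\theta}$, both $p_nq^{m_0}$ and $\overline{g}$ carry only nonpositive Fourier frequencies, so the zero mode of their product is \emph{exactly} $c_0^{-1}\gamma_n r^{n+sm_0}$; Bessel's inequality on each circle and integration in $r$ then give $1\geq \gamma_n^2c_0^{-2}/(n+sm_0+1)$, i.e.\ $\gamma_n\leq c_0\sqrt{n+sm_0+1}=c_0\sqrt{n}\left(1+O(1/n)\right)$. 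This single-Fourier-mode extraction is precisely the collective cancellation your plan is missing, and it is obtained elementarily, without contour deformation and without using the mutual orthogonality of the $p_k$ at all. If you want to repair your route, you would essentially have to rediscover an argument of this type to control $\sum_{k<n}|\beta_{n,k}|^2$.
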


 \begin{proof}Among all monic polynomials $P$ of degree $\leq n$, the orthogonal polynomial $\gamma_n^{-1}p_n$ is the one with smallest $\|\cdot\|_w$ norm. This follows from the identity $$\|P\|^2_w=\|P-\gamma_n^{-1}p_n\|^2_w+\|\gamma_n^{-1}p_n\|_w^2.$$
 	 	
 Because of this extremality property, we have 
 \begin{align*}
 	1={} & \int_{\D}|p_n(z)|^2w(z)d\sigma(z)\leq \frac{\gamma^2_n}{c_0^2}\int_{\D}|F_n(z)|^2w(z)d\sigma(z),
 \end{align*}
which by \eqref{normoffaberpolys} yields
\begin{align}\label{firstgammainequality}
	1\leq  \frac{\gamma^2_n}{c_0^2n}\left(1 +O(n^{-1 })\right)\,. 
\end{align}

Let $2m_0$ be an even integer such that $2m_0\geq m_k$ for all $1\leq k\leq s$. Since 
\[
\left|\frac{z-a_k}{1-\cj{a}_kz}\right|< 1, \quad |z|<1, 
\]
we have that with $q$ and $q^*$ as defined by \eqref{defqandq*} (recall  \eqref{defweightw}),
\[
w(z)\geq \left|\frac{v(z)q(z)^{m_0}}{q^*(z)^{m_0}}\right|^2,\quad |z|<1.
\] 
Hence 
\begin{align}\label{inequality1}
	1={} & \int_{\D}|p_n(z)|^2w(z)d\sigma(z)\geq \int_{\D}\left|p_n(z)q(z)^{m_0}\cj{g(z)}\right|^2d\sigma(z),
	\end{align}
with
\[
g(z)=\frac{v(z)}{q^*(z)^{m_0}}.
\]

This function $g$ is analytic in $\cj{\D}$, and from its Maclaurin expansion 
\[
g(z)=v(0)+\sum_{k=1}^\infty g_kz^k
\] 
we get   (recall that $c_0=v^*(\infty)=1/v(0)>0$)
\begin{align}\label{expansionofG}
\cj{g(re^{i\theta})}=c_0^{-1}+\sum_{k=1}^\infty \cj{g_k}r^ke^{-ik\theta}.
\end{align}

The polynomial $q(z)^{m_0}$ is a monic polynomial of degree $sm_0$,  so  that if we write 
\[
p_n(z)q(z)^{m_0}=\sum_{k=0}^{n+sm_0}b_{n,k}z^{n+sm_0-k},
\]
then
\[
e^{-i(n+sm_0)\theta}p_n(re^{i\theta})q(re^{i\theta})^{m_0}=\gamma_nr^{n+sm_0}+\sum_{k=1}^{n+sm_0}b_{n,k}r^{n+sm_0-k}e^{-ik\theta}\,.
\]

This and \eqref{expansionofG} yields
\begin{align*}
\int_{0}^{2\pi}\left|e^{-i(n+sm_0)\theta}p_n(re^{i\theta})q(re^{i\theta})^{m_0}\cj{g(re^{i\theta})}\right|^2d\theta \geq 2\pi c_0^{-2}\gamma_n^2r^{2n+2sm_0},
\end{align*}
and so we obtain  from \eqref{inequality1} 
\begin{align*}
1\geq{} &\frac{1}{\pi}\int_{0}^1rdr\int_{0}^{2\pi}\left|e^{-i(n+sm_0)\theta}p_n(re^{i\theta})q(re^{i\theta})^{m_0}\cj{g(re^{i\theta})}\right|^2d\theta\\
\geq {} &\frac{\gamma_n^2c_0^{-2}}{n+sm_0+1}\,.
\end{align*}
This inequality and  that of \eqref{firstgammainequality} combine to produce  \eqref{asymptforgamma_n}.
\end{proof}

Let us now look into the quantities
\begin{align} \label{defalphas}
\alpha_{n,k}:=\frac{1}{2\pi i}\int_{|\zeta|=1}v(\zeta)\zeta^{n-1}\cj{p_k(\zeta)}d\zeta\,, \quad n,k\geq 0.
\end{align}

These numbers $\alpha_{n,k}$ will play an important role in the proof of Theorem \ref{maintheorem}. Since $\zeta=1/\cj{\zeta}$ when $|\zeta|=1$, and since $z^k\cj{p_k(1/\cj{z})}$ is a polynomial whose value at zero is $\gamma_k$, it follows directly from \eqref{defalphas} by an application of Cauchy's integral formula that 
\begin{align} \label{prop-estimates-alphas}
	\alpha_{n,k}= {} &\begin{cases}
		0,&\quad  0\leq k\leq n-1,\\
		\gamma_n/c_0, & \quad k=n.
	\end{cases}
\end{align}

We shall presently see that the numbers $\alpha_{n,k}$ are uniformly bounded once $k>n$, but a better estimate will be achieved later in \eqref{bestestimatealpha}.

\begin{proposition}\label{prop-for-alphas}There exists a constant $C$ such that 
\begin{align}\label{constantC}
|\alpha_{n,k}|\leq C\,,\quad k>n\geq 0\,.
\end{align}
\end{proposition}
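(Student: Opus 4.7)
The plan is to isolate the leading behavior of $p_k$ by subtracting off a suitable multiple of the Faber polynomial $F_k$, so that the dominant contribution to $\alpha_{n,k}$ can be evaluated exactly and is shown to vanish when $k>n$. I would write
\[
p_k=\frac{\gamma_k}{c_0}F_k+r_k,
\]
where $r_k$ is a polynomial of degree at most $k-1$. Since $F_k=c_0z^k+(\text{lower order})$, and since $\langle p_k,z^k\rangle_w=1/\gamma_k$ (a direct consequence of $\|p_k\|_w=1$ together with the orthogonality of $p_k$ to $z^j$ for $0\leq j<k$), a short calculation gives $\langle p_k,r_k\rangle_w=0$. Applying the Pythagorean identity to $(\gamma_k/c_0)F_k=p_k-r_k$ then yields
\[
(\gamma_k/c_0)^2\|F_k\|_w^2=\|p_k\|_w^2+\|r_k\|_w^2=1+\|r_k\|_w^2,
\]
and combining \eqref{asymptforgamma_n} with \eqref{normoffaberpolys} forces the key bound $\|r_k\|_w^2=O(1/k)$.

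Inserting this decomposition into \eqref{defalphas} splits $\alpha_{n,k}=(\gamma_k/c_0)\phi_{n,k}+\beta_{n,k}$, where $\phi_{n,k}$ and $\beta_{n,k}$ are the analogous integrals obtained by replacing $p_k$ with $F_k$ and $r_k$, respectively. To evaluate $\phi_{n,k}$ exactly I would use \eqref{FaberInfinityBehavior} to write $F_k(\zeta)=\zeta^kv^*(\zeta)-G_k(\zeta)$, where $G_k$ is a function of $1/\zeta$ vanishing at infinity, and then exploit the identity $v^*(\zeta)=1/\cj{v(\zeta)}$ valid on $|\zeta|=1$, which gives $\cj{\zeta^kv^*(\zeta)}=\zeta^{-k}/v(\zeta)$ there. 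A direct residue computation then produces $\phi_{n,k}=\delta_{n,k}$ (the $G_k$ contribution contributes only nonnegative powers of $\zeta$ to the integrand and hence vanishes). In particular, $\alpha_{n,k}=\beta_{n,k}$ for every $k>n$.

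It remains to bound $\beta_{n,k}$ uniformly. By Cauchy--Schwarz on the unit circle,
\[
|\beta_{n,k}|^2\leq \frac{1}{(2\pi)^2}\int_{|\zeta|=1}|v|^2\,|d\zeta|\cdot \int_{|\zeta|=1}|r_k|^2\,|d\zeta|,
\]
and the first factor is a harmless constant. For the second I would apply the Bernstein-type inequality \eqref{norm-inequalities} to $r_k$ (of degree $\leq k-1$), together with the pointwise lower bound $w\geq m>0$ available on any annulus $\tilde r\leq|z|\leq 1$ with $\tilde r$ sufficiently close to $1$ (concretely, any $\tilde r>\max_k|a_k|$ works), obtaining
\[
\int_{|\zeta|=1}|r_k|^2\,|d\zeta|\leq \frac{2\pi k}{1-\tilde r^2}\int_{\tilde r\leq|z|\leq 1}|r_k|^2\,d\sigma\leq \frac{2\pi k}{m(1-\tilde r^2)}\|r_k\|_w^2=O(1).
\]
This gives $|\beta_{n,k}|=O(1)$ uniformly in $n$ and $k$, which is the claim.

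The heart of the argument is the exact cancellation in the last display: the factor $k$ produced by the Bernstein-type inequality is precisely absorbed by the $O(1/k)$ decay of $\|r_k\|_w^2$, so the weighted extremality of $p_k$ (via $F_k$) translates into a uniform bound on the off-diagonal Fourier-type coefficients. The delicate step is the identification $\phi_{n,k}=\delta_{n,k}$, which is what actually kills the large $(\gamma_k/c_0)\sim\sqrt{k}$ prefactor for $k>n$; without this exact vanishing, a naive Cauchy--Schwarz applied directly to $\alpha_{n,k}$ would yield only $|\alpha_{n,k}|=O(\sqrt{k})$.
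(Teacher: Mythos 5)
Your proof is correct and follows essentially the same route as the paper: you decompose $p_k=(\gamma_k/c_0)F_k+r_k$, show the Faber contribution to \eqref{defalphas} vanishes for $k>n$, and bound the remainder via Cauchy--Schwarz, the inequality \eqref{norm-inequalities}, the positivity of $w$ near $\T$, and the $O(1/k)$ bound on $\|r_k\|_w^2$ obtained from \eqref{normoffaberpolys} and \eqref{asymptforgamma_n}. Your Pythagorean identity is the same cross-term computation the paper carries out directly, and your exact evaluation $\phi_{n,k}=\delta_{n,k}$ is merely a slight sharpening of the paper's observation that this integral vanishes for $k>n$.
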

\begin{proof}
From the definition \eqref{defalphas}, we see that 
\begin{align*}
\cj{\alpha_{n,k}}= {} &-\frac{1}{2\pi i}\cj{\int_{|\zeta|=1}v(\zeta)\zeta^{n-1}\cj{p_{k}(\zeta)}d\zeta}\\
={} &-\frac{1}{2\pi i}\int_{|\zeta|=1}\cj{v(\zeta)\zeta^{n-1}}p_{k}(\zeta)\cj{d\zeta}\\
={} &\frac{1}{2\pi i}\int_{|\zeta|=1}\cj{v(\zeta)\zeta^{n+1}}p_{k}(\zeta)d\zeta\,.
\end{align*}

Since $\cj{v(z)}=\cj{v(1/\cj{z})}=1/v^*(z)$ for $|z|=1$, this can also be written as 
\begin{align}\label{secondformulaforthealphas}
\cj{\alpha_{n,k}}= {} &\frac{1}{2\pi i}\int_{|\zeta|=1}\frac{\zeta^{-n-1}}{v^*(\zeta)}p_{k}(\zeta)d\zeta\,.
\end{align}

To prove \eqref{constantC}, we first observe that by \eqref{FaberInfinityBehavior}, if $k> n\geq 0$, then
\begin{align*}
\frac{1}{2\pi i}\int_{|\zeta|=1}\cj{v(\zeta)\zeta^{n+1}}F_{k}(\zeta)d\zeta={} &\frac{1}{2\pi i}\int_{|\zeta|=1}\frac{\zeta^{-n-1}}{v^*(\zeta)}\left(\zeta^kv^*(\zeta)+O(1/\zeta)\right)d\zeta\\={} &0.
\end{align*}
 Therefore, for all $ k> n\geq 0$, we have
\begin{align*}
\cj{\alpha_{n,k}}= {} &\frac{1}{2\pi i}\int_{|\zeta|=1}\cj{v(\zeta)\zeta^{n+1}}\left(p_{k}(\zeta)-\frac{\gamma_{k}}{c_0}F_{k}(\zeta)\right)d\zeta,
\end{align*}
and consequently, 
\begin{align}\label{boundformodulusofalpha}
	|\alpha_{n,k}|\leq{}  &\left(\int_{|\zeta|=1}|v(\zeta)|^2|d\zeta|\right)^{1/2}\left(\int_{|\zeta|=1}\left|p_{k}(\zeta)-\frac{\gamma_{k}}{c_0}F_{k}(\zeta)\right|^2|d\zeta|\right)^{1/2}.
\end{align}

We then proceed to estimate the second factor in \eqref{boundformodulusofalpha}. Fix a number $r<1$  such that every $a_k$ is contained in the disk $|z|<r$. Noting that 
\[
d_r:=\min_{r\leq |z|\leq 1}w(z)>0,
\]
and aided by \eqref{norm-inequalities}, we obtain
\begin{align*}\begin{split}
	\int_{|\zeta|=1}\left|p_{k}(\zeta)-\frac{\gamma_{k}}{c_0}F_{k}(\zeta)\right|^2|d\zeta|\leq {} &\frac{2\pi k}{1-r^2}\int_{r\leq |\zeta|\leq 1}\left|p_{k}(\zeta)-\frac{\gamma_{k}}{c_0}F_{k}(\zeta)\right|^2d\sigma(z)\\ 
	\leq {} &\frac{2\pi k}{(1-r^2)d_r}\int_{\D}\left|p_{k}(\zeta)-\frac{\gamma_{k}}{c_0}F_{k}(\zeta)\right|^2w(z)d\sigma(z).
	\end{split}
\end{align*}

The latter integral can be estimated by using \eqref{normoffaberpolys} and \eqref{asymptforgamma_n} as follows:
\begin{align*}\begin{split}
	\int_{\D}&\left|p_{k}(\zeta)-\frac{\gamma_{k}}{c_0}F_{k}(\zeta)\right|^2w(z)d\sigma(z)\\
	={} &1-2\Re\left(\int_{\D}p_{k}(\zeta)\cj{\frac{\gamma_{k}}{c_0}F_{k}(\zeta)}w(z)d\sigma(z)\right)+\frac{\gamma^2_{k}}{c^2_0}\int_{\D}\left|F_{k}(\zeta)\right|^2w(z)d\sigma(z)\\
	={} & \frac{\gamma^2_{k}}{c^2_0}\int_{\D}\left|F_{k}(\zeta)\right|^2w(z)d\sigma(z)-1=O\left(k^{-1}\right)\,.
	\end{split}
\end{align*}

This estimate and the previous inequality show that the right-hand side of \eqref{boundformodulusofalpha} is uniformly bounded in $k$, which proves \eqref{constantC}.
\end{proof}

\section{Proof of Theorem \ref{maintheorem}}\label{sectionproofmaintheorem}
From the structural formula for $K_h$ established in \cite{MD1}, it follows that 
\begin{align}\label{KandL2}
	K_{h}(z,1/\cj{\zeta})=\zeta^2L(z,\zeta),
\end{align}
where $L(z,\zeta)$ is a rational function of the form (recall \eqref{defqandq*})
\begin{align}\label{structureofL}
L(z,\zeta)=\frac{1}{(\zeta-z)^2} +\frac{V(z,\zeta) }{(\zeta-z)q^*(z)q(\zeta)}
\end{align}
with $V(z,\zeta)$ a polynomial in the variables $z$ and $\zeta$.

By the  symmetry of the kernel, the series in \eqref{kernelexpansionpolynomials} converges locally uniformly for $|\zeta|<1/\tau_z$, for each fixed $z\in \D$. Since $\tau_z<1$, we can combine \eqref{kernelexpansionpolynomials}, \eqref{KandL2} and \eqref{exteriorszegofunction} to obtain 
\begin{align}\label{expansionwithL}
\frac{(vv^*)(\zeta)L(z,\zeta)\zeta^{n+1}}{v(z)}=v(\zeta)\zeta^{n-1}\sum_{k=0}^\infty \cj{p_k(1/\cj{\zeta})}p_k(z)\,, \quad z\in \D,\ |\zeta|=1.
\end{align}

Let us now define, for every $n\geq 0$, a function $Q_n$ analytic in $|z|<1/\rho_w$ as follows.  For all $\rho_w<r<1/\rho_w$,
\begin{align}\label{defQ}
Q_n(z):=\frac{1}{2\pi i v(z)}\int_{|\zeta|=r}(vv^*)(\zeta)L(z,\zeta)\zeta^{n+1}d\zeta,\quad |z|<r.
\end{align}

By Cauchy's theorem, it is clear that this function is well-defined, meaning that for $z$ fixed, the integral in \eqref{defQ} produces the same value for any $r>|z|$.

If we make use of \eqref{defalphas} and \eqref{prop-estimates-alphas} when integrating \eqref{expansionwithL} over the unit circle with respect to $\zeta$, we find that for all $n\geq 0$,
\begin{align}\label{polyfirstexpansion}
	\alpha_{n,n}p_n(z)={} &Q_n(z)-\sum_{k=n+1}^\infty \alpha_{n,k}p_k(z),\quad |z|<1.
\end{align}
 
Our next step is crucial, and consists of iterating \eqref{polyfirstexpansion}. This is  best carried out in two separate lemmas.

For any triplet of integers $k$, $m$, and $n$ such that $m,n\geq 0$ and $k\geq n+m+1$, we  define corresponding numbers $h(n,m)$ and $g(n,m,k)$ in the following recursive manner:  
\begin{align}\label{numbers-h-and-g-1}
h(n,0)=1,\quad g(n,0,k)=-\alpha_{n,k},\quad k> n\geq 0,
\end{align}
\begin{align}\label{numbers-h-and-g-2}
h(n,m+1)=\frac{g(n,m,n+m+1)}{\alpha_{n+m+1,n+m+1}}\,,
\end{align}
\begin{align}\label{numbers-h-and-g-3}
g(n,m+1,k)= g(n,m,k)-h(n,m+1)\alpha_{n+m+1,k},\quad k\geq n+m+2.
\end{align}

\begin{lemma}
For all integers $n,m\geq 0$, the equality
\begin{align}\label{polysecondexpansion}
\alpha_{n,n}p_n(z)={} &\sum_{j=0}^mh(n,j)Q_{n+j}(z)+\sum_{k=n+m+1}^\infty g(n,m,k)p_k(z)
\end{align}
holds true for $|z|<1$.
\end{lemma}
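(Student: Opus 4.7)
The plan is a straightforward induction on $m \geq 0$, with the base case $m=0$ being precisely the identity \eqref{polyfirstexpansion}, once we note that the definitions \eqref{numbers-h-and-g-1} give $h(n,0)=1$ and $g(n,0,k)=-\alpha_{n,k}$ for $k>n$, so the right-hand side of \eqref{polysecondexpansion} at $m=0$ reads $Q_n(z)-\sum_{k=n+1}^{\infty}\alpha_{n,k}p_k(z)$, matching \eqref{polyfirstexpansion}.

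For the inductive step, assume \eqref{polysecondexpansion} holds for some $m$. The idea is to use \eqref{polyfirstexpansion}, with $n$ replaced by $n+m+1$, to eliminate $p_{n+m+1}(z)$ from the tail sum. Concretely, I would peel off the leading term of the series,
\[
\sum_{k=n+m+1}^{\infty}g(n,m,k)p_k(z)=g(n,m,n+m+1)p_{n+m+1}(z)+\sum_{k=n+m+2}^{\infty}g(n,m,k)p_k(z),
\]
and then replace $p_{n+m+1}(z)$ with
\[
p_{n+m+1}(z)=\frac{1}{\alpha_{n+m+1,n+m+1}}\,Q_{n+m+1}(z)-\frac{1}{\alpha_{n+m+1,n+m+1}}\sum_{k=n+m+2}^{\infty}\alpha_{n+m+1,k}p_k(z),
\]
which is \eqref{polyfirstexpansion} applied at index $n+m+1$. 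Using \eqref{numbers-h-and-g-2} the coefficient of $Q_{n+m+1}(z)$ simplifies to $h(n,m+1)$, and regrouping the tails according to \eqref{numbers-h-and-g-3} yields exactly the $(m+1)$-case of \eqref{polysecondexpansion}.

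The only point that warrants care is the justification for separating and recombining the infinite series, but both tails converge normally on compact subsets of $\D$ by the same reasoning that makes \eqref{polyfirstexpansion} meaningful: the coefficients $\alpha_{n+j,k}$ are uniformly bounded by Proposition \ref{prop-for-alphas}, so $g(n,m,k)$ is bounded in $k$ for fixed $n,m$, and the series $\sum_{k}p_k(z)\overline{p_k(\zeta)}$ converges normally in $z\in\D$ for any fixed $\zeta\in\D$ by \eqref{kernelexpansionpolynomials}, so by Cauchy--Schwarz the relevant series converge absolutely on compacta of $\D$. This legitimizes the term-by-term manipulations, and no delicate analysis is needed—the entire content of the lemma is the combinatorial bookkeeping encoded in the recursive definitions \eqref{numbers-h-and-g-1}--\eqref{numbers-h-and-g-3}.

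I do not anticipate a serious obstacle; the main thing to get right is to align the indices so that the recursions \eqref{numbers-h-and-g-2} and \eqref{numbers-h-and-g-3} drop out of the algebra after substitution, rather than requiring any further identity. The convergence issue is genuine but mild and can be dispatched with a one-line appeal to Proposition \ref{prop-for-alphas} combined with the reproducing-kernel expansion \eqref{kernelexpansionpolynomials}.
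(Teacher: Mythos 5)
Your proposal is correct and follows essentially the same route as the paper: induction on $m$, peeling off the leading term of the tail and substituting \eqref{polyfirstexpansion} at index $n+m+1$, with the recursions \eqref{numbers-h-and-g-2}--\eqref{numbers-h-and-g-3} absorbing the resulting coefficients. One small correction to your convergence remark: Cauchy--Schwarz with merely bounded coefficients $g(n,m,k)$ does not give absolute convergence from $\sum_k|p_k(z)|^2<\infty$; the clean justification, which the paper itself uses in Lemma \ref{lemma-poly-expansion}, is the geometric decay $\limsup_{k\to\infty}|p_k(z)|^{1/k}=\tau_z<1$ for each fixed $z\in\D$.
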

\begin{proof} The proof is by induction on $m$. For $m=0$, \eqref{polysecondexpansion} reduces to \eqref{polyfirstexpansion}. Assuming \eqref{polysecondexpansion} is true for some $m$, we can use \eqref{polyfirstexpansion} (with $n$ replaced by $n+m+1$) to get 
\begin{align*}
\alpha_{n,n}p_n(z)={} &\sum_{j=0}^mh(n,j)Q_{n+j}(z)+\sum_{k=n+m+2}^\infty g(n,m,k)p_k(z)\\
&+\frac{ g(n,m,n+m+1)}{\alpha_{n+m+1,n+m+1}}\alpha_{n+m+1,n+m+1}p_{n+m+1}(z)\\
={} &\sum_{j=0}^mh(n,j)Q_{n+j}(z)+\sum_{k=n+m+2}^\infty g(n,m,k)p_k(z)\\
&+\frac{ g(n,m,n+m+1)}{\alpha_{n+m+1,n+m+1}}\left(Q_{n+m+1}(z)-\sum_{k=n+m+2}^\infty\alpha_{n+m+1,k}p_k(z)\right)\\
={}&\sum_{j=0}^mh(n,j)Q_{n+j}(z)+\frac{ g(n,m,n+m+1)}{\alpha_{n+m+1,n+m+1}}Q_{n+m+1}(z)\\
&+\sum_{k=n+m+2}^\infty\left( g(n,m,k)-\frac{ g(n,m,n+m+1)}{\alpha_{n+m+1,n+m+1}}\alpha_{n+m+1,k}\right)p_k(z)\,.  
\end{align*}

The latter three terms are precisely what we obtain if we replace $m$ by $m+1$ in \eqref{polysecondexpansion} and use the defining relations \eqref{numbers-h-and-g-2} and \eqref{numbers-h-and-g-3}.
\end{proof}

\begin{lemma}\label{lemma-poly-expansion}Suppose that there exist constants $C,C',\lambda\geq 0$ such that 
\begin{align}\label{bound-conditions}
\frac{1}{\alpha_{n,n}}\leq \frac{C'}{\sqrt{n}},\quad |\alpha_{n,k}|\leq \frac{C}{k^\lambda},\quad k>n\geq 0.
\end{align}
Then, we have the series representation 
\begin{align}\label{polythirdexpansion}
\alpha_{n,n}p_n(z)={} &\sum_{j=0}^\infty h(n,j) Q_{n+j}(z), \quad |z|<1,
\end{align}
with $h(n,0)=1$ and the remaining coefficients satisfying the inequality
\begin{align}\label{bounds-for-hn}
|h(n,j)|\leq \frac{CC'}{(n+j)^{\lambda+1/2}}\prod_{\ell=1}^{j-1}\left(1+\frac{CC'}{(n+\ell)^{\lambda+1/2}}\right) ,\quad j\geq 1\,.
\end{align}
\end{lemma}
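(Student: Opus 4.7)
The plan is to induct on $m$ to establish both the bound \eqref{bounds-for-hn} on $h(n,j)$ and a matching bound on $g(n,m,k)$, and then to pass to the limit $m\to\infty$ in the partial-sum identity \eqref{polysecondexpansion} from the preceding lemma.

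For the inductive stage I would prove jointly, for $m\ge 0$, the companion estimate
\[
|g(n,m,k)| \le \frac{C}{k^{\lambda}} \prod_{\ell=1}^{m}\left(1+\frac{CC'}{(n+\ell)^{\lambda+1/2}}\right), \quad k\ge n+m+1,
\]
alongside \eqref{bounds-for-hn}. The base case $m=0$ is immediate from $g(n,0,k)=-\alpha_{n,k}$ together with the hypothesis $|\alpha_{n,k}|\le C/k^{\lambda}$. For the inductive step, use \eqref{numbers-h-and-g-2} with $1/\alpha_{n+m+1,n+m+1}\le C'/\sqrt{n+m+1}$ to bound $|h(n,m+1)|$ by $CC'/(n+m+1)^{\lambda+1/2}$ times the partial product for $\ell\le m$, which is precisely \eqref{bounds-for-hn} at index $j=m+1$; then invoke \eqref{numbers-h-and-g-3} together with this new bound and the hypothesis on $|\alpha_{n+m+1,k}|$ to propagate the estimate on $g$ one step forward, picking up the extra factor $1+CC'/(n+m+1)^{\lambda+1/2}$ in the product.

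Once the bounds are in place, the preceding lemma supplies
\[
\alpha_{n,n}p_n(z)=\sum_{j=0}^{m}h(n,j)Q_{n+j}(z)+R_m(z),\qquad R_m(z):=\sum_{k\ge n+m+1}g(n,m,k)p_k(z),
\]
and it remains to show $R_m\to 0$ locally uniformly on $\D$. By the orthonormality of $(p_k)$, Parseval's identity gives $\|R_m\|_w^2=\sum_{k\ge n+m+1}|g(n,m,k)|^2$; applying the inductive bound on $g(n,m,k)$ shows that this quantity tends to $0$ as $m\to\infty$, the $k^{-2\lambda}$ decay providing summability in $k$ while the uniform control of the infinite product keeps the $m$-dependence bounded. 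Normal convergence of $R_m(z)$ to $0$ on compact subsets of $\D$ follows from the reproducing-kernel pointwise bound $|R_m(z)|\le \sqrt{K_w(z,z)}\,\|R_m\|_w$, since $K_w(z,z)$ is locally bounded in $\D$. Letting $m\to\infty$ in the displayed identity produces \eqref{polythirdexpansion}.

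The principal obstacle lies in the last step: one must verify that the potentially growing product $\prod_{\ell}\bigl(1+CC'/(n+\ell)^{\lambda+1/2}\bigr)^{2}$ is genuinely dominated by the decay furnished by $\sum_{k>n+m}1/k^{2\lambda}$, so that the $A^2_w$-tail actually vanishes. This is where the particular exponent $\lambda+1/2$ in \eqref{bounds-for-hn} is crucial—it is exactly the exponent generated by combining the factor $1/\sqrt{n+m+1}$ from $1/\alpha_{n+m+1,n+m+1}$ with the decay $1/k^{\lambda}$ of the $\alpha_{n,k}$, and it is what makes the balance tip in our favor.
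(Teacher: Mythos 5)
Your inductive stage coincides with the paper's proof: the joint induction giving \eqref{bounds-for-hn} together with the companion estimate $|g(n,m,k)|\le \frac{C}{k^\lambda}\prod_{\ell=1}^{m}\bigl(1+\frac{CC'}{(n+\ell)^{\lambda+1/2}}\bigr)$ for $k\ge n+m+1$ is exactly \eqref{inequalityforg}. The genuine gap is in your limit step. Your Parseval argument needs $\sum_{k\ge n+m+1}|g(n,m,k)|^2\to 0$, and your bound only yields $\|R_m\|_w^2\le C^2P_m^2\sum_{k>n+m}k^{-2\lambda}$ with $P_m:=\prod_{\ell=1}^{m}\bigl(1+\frac{CC'}{(n+\ell)^{\lambda+1/2}}\bigr)$; this requires $2\lambda>1$ to even be finite. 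But the hypothesis \eqref{bound-conditions} only assumes $\lambda\ge 0$, and the two instances in which the lemma is actually invoked in the paper are $\lambda=0$ (via Proposition \ref{prop-for-alphas}, yielding $H_n=O(n^{-1/2})$) and $\lambda=1/2$ (via \eqref{bestestimatealpha}, yielding $H_n=O(n^{-1})$). For $\lambda\le 1/2$ the tail $\sum_k k^{-2\lambda}$ diverges, and the product $P_m$ itself diverges as $m\to\infty$ (since $\sum_\ell (n+\ell)^{-(\lambda+1/2)}=\infty$ when $\lambda+1/2\le 1$), so the ``uniform control of the infinite product'' you invoke is unavailable: the balance you hope for in your closing paragraph does \emph{not} tip in your favor precisely in the regimes where the lemma is needed.

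The paper sidesteps $L^2$ machinery entirely and argues pointwise: for each fixed $z\in\D$ one has $\tau_z=\limsup_k|p_k(z)|^{1/k}<1$, hence $|p_k(z)|\le A_\epsilon(\tau_z+\epsilon)^k$ with $\tau_z+\epsilon<1$. The tail $\sum_{k\ge n+m+1}|g(n,m,k)p_k(z)|$ is then dominated (see \eqref{product-convergence}) by a constant times $\prod_{\ell=1}^{m}\bigl[\bigl(1+\frac{CC'}{(n+\ell)^{\lambda+1/2}}\bigr)(\tau_z+\epsilon)\bigr]$, obtained by distributing the geometric factor among the product terms; since each bracket tends to $\tau_z+\epsilon<1$, this product tends to zero for \emph{every} $\lambda\ge 0$ — the geometric decay of $p_k(z)$ inside $\D$, not summability in $k$, is what kills the (at most subexponential) growth of $P_m$. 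To repair your proof, replace the Parseval step by this pointwise estimate; note also a secondary flaw: even when $2\lambda>1$, the identification $\|R_m\|_w^2=\sum_{k\ge n+m+1}|g(n,m,k)|^2$ requires justifying that the pointwise series is the $A^2_w$-Fourier expansion of $R_m$, which normal convergence on compacts alone does not supply.
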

\begin{proof} By hypothesis, and by the defining relations \eqref{numbers-h-and-g-1}-\eqref{numbers-h-and-g-3}, we have
\[
h(n,0)=1,\quad |g(n,0,k)|\leq  \frac{C}{k^\lambda },\quad k\geq n+1,
\] 
\begin{align*}
|h(n,m+1)|\leq {} &\frac{C'|g(n,m,n+m+1)|}{\sqrt{n+m+1}}\,,\\
|g(n,m+1,k)|\leq {} & |g(n,m,k)|+\frac{C}{k^{\lambda}}|h(n,m+1)|,\quad k\geq n+m+2.
\end{align*}

From these inequalities, we proceed by induction (in the second variable $m$) to prove \eqref{bounds-for-hn} and that  
\begin{align}\label{inequalityforg}
|g(n,m,k)|\leq \frac{C}{k^\lambda}\prod_{\ell=1}^{m}\left(1+\frac{CC'}{(n+\ell)^{\lambda+1/2}}\right),\quad m\geq 0,\ k\geq n+m+1.
\end{align}

For $m=0$ the inequality \eqref{inequalityforg} is correct. Then, assuming  it is also correct for $g(n,m,k)$, we have 
\begin{align*}
	|h(n,m+1)|\leq \frac{C'}{\sqrt{n+m+1}}\frac{C}{(n+m+1)^\lambda}\prod_{\ell=1}^{m}\left(1+\frac{CC'}{(n+\ell)^{\lambda+1/2}}\right)
\end{align*}
and 
\begin{align*}
	|g(n,m+1,k)|\leq{} & \frac{C}{k^\lambda}\prod_{\ell=1}^{m}\left(1+\frac{CC'}{(n+\ell)^{\lambda+1/2}}\right)\\
	&+\frac{C}{k^{\lambda}}\frac{C'C}{(n+m+1)^{\lambda+1/2}}\prod_{\ell=1}^{m}\left(1+\frac{CC'}{(n+\ell)^{\lambda+1/2}}\right)\\
	\leq {} &\frac{C}{k^\lambda}\left(1+\frac{C'C}{(n+m+1)^{\lambda+1/2}}\right)\prod_{\ell=1}^{m}\left(1+\frac{CC'}{(n+\ell)^{\lambda+1/2}}\right)\,.
\end{align*}

Let us now think of $z\in \D$ as being fixed. Since  $\tau_z=\limsup_{k\to\infty}|p_k(z)|^{1/k}<1$, we can find $\epsilon>0$ and a constant $A_\epsilon$ such that $\tau_z+\epsilon<1$ and $|p_k(z)|\leq A_{\epsilon} (\tau_z+\epsilon)^k$ for all $k\geq 0$. This and \eqref{inequalityforg} imply that 
\begin{align}\label{product-convergence}
	\begin{split}
&\sum_{k=n+m+1}^\infty |g(n,m,k)p_k(z)|\\
&\leq   A_\epsilon C\prod_{\ell=1}^{m}\left(1+\frac{CC}{(n+\ell)^{\lambda+1/2}}\right)\sum_{k=n+m+1}^\infty \frac{(\tau_z+\epsilon)^k}{k^\lambda}\\
&{}\leq {} \frac{A_\epsilon C (\tau_z+\epsilon)^{n+1}}{(n+m+1)^\lambda(1-(\tau_z+\epsilon))}\prod_{\ell=1}^{m}\left[\left(1+\frac{CC'}{(n+\ell)^{\lambda+1/2}}\right)(\tau_z+\epsilon)\right].
\end{split}
\end{align}

Since 
\[
\lim_{\ell\to\infty}\left(1+\frac{CC'}{(n+\ell)^{\lambda+1/2}}\right)(\tau_z+\epsilon)= (\tau_z+\epsilon)<1,
\]
the last product that occurs in \eqref{product-convergence} converges to zero as $m\to\infty$, and so we conclude that for $|z|<1$,
\[
\lim_{m\to\infty}\sum_{k=n+m+1}^\infty |g(n,m,k)p_k(z)|=0,
\]
which in view of \eqref{polysecondexpansion} confirms the validity or \eqref{polythirdexpansion}. 
\end{proof}

Let us now define
\[
H_n(z):=\sum_{j=1}^\infty h(n,j) z^j\,.
\]

By Lemma \ref{lemma-poly-expansion}, when constants $C,C',\lambda\geq 0$  can be found such that \eqref{bound-conditions} holds, then (see \eqref{bounds-for-hn})
\[
|H_n(z)|\leq  \frac{CC'}{(n+1)^{\lambda+1/2}}\sum_{j=1}^\infty|z|^{j}\prod_{\ell=1}^{j-1}\left(1+\frac{CC'}{(n+\ell)^{\lambda+1/2}}\right)\,.
\]
This would imply that $H_n(z)$ is well-defined and analytic in $\D$, and that 
\begin{align}\label{estimateforH}
|H_n(z)|=O(n^{-\lambda-1/2})
\end{align}
uniformly as $n\to \infty$ on every disk $|z|\leq r$ of radius $r<1$. Moreover, from the definition of $Q_n$ and \eqref{polythirdexpansion},  we would also  have that for every $r$ in the range $\rho_w<r<1$, 
\begin{align}\label{formula-to-be-improved}
\alpha_{n,n}p_n(z)={} &
\frac{1}{2\pi i v(z)}\int_{|\zeta|=r}(vv^*)(\zeta)L(z,\zeta)\zeta^{n+1}(1+H_n(\zeta))d\zeta,\quad |z|<r.
\end{align}

By \eqref{prop-estimates-alphas} and Proposition \ref{prop-for-gammas}, the first inequality of \eqref{bound-conditions} is always satisfied, and by Proposition \ref{prop-for-alphas}, the second inequality of \eqref{bound-conditions} holds true with $\lambda=0$. Therefore, we can say for sure that $H_n(z)=O(n^{-1/2})$ uniformly as $n\to\infty$ on compact subsets of $\D$.

We now proceed to get a better estimate for the coefficients $\alpha_{n,k}$ of \eqref{defalphas}. Let $\rho_v$ be the smallest number such that $v^*$ can be analytically continued without ever vanishing in $|z|>\rho_v$, and suppose that $\mu$ and $\eta$ are such that 
\begin{align}\label{numbers}
\rho_w<\mu<1, \quad \max\{\mu,\rho_v\}<\eta<1.
\end{align}

Bearing in mind the representation \eqref{structureofL}, the contour of integration in  \eqref{formula-to-be-improved} can be retracted to the circle $|\zeta|=\mu$ if we properly account for residues. Thus, for $\mu<|z|<1$,  we have
\begin{align}\label{closetofinalrepresentation}
	\begin{split}
\alpha_{n,n} p_n(z)={} &R_n(z) +\frac{1}{v(z)}\left[v(z)v^*(z)z^{n+1}(1+H_n(z))\right]    
'\\
&+\frac{z^{n+1} v^*(z)V(z,z) (1+H_n(z)
	)}{q^*(z)q(z)},
\end{split}
\end{align}
with
\begin{align*}
R_n(z)={} &\frac{1}{2\pi i v(z)}\int_{|\zeta|=\mu}(vv^*)(\zeta)L(z,\zeta)\zeta^{n+1}(1+H_n(\zeta))d\zeta\\
={} & \frac{1}{2\pi i v(z)}\int_{|\zeta|=\mu}\frac{v(\zeta)v^*(\zeta)\zeta^{n+1}}{(\zeta-z)^2}(1+H_n(\zeta))d\zeta\\
&+\frac{1}{2\pi i v(z)q^*(z)}\int_{|\zeta|=\mu}\frac{(v^*/q)(\zeta)v(\zeta)V(z,\zeta)\zeta^{n+1}}{\zeta-z}(1+H_n(\zeta))d\zeta\,.
\end{align*}

By Cauchy's theorem, we can write  \eqref{secondformulaforthealphas} as 
\begin{align*}
	\cj{\alpha_{n,k}}= {} &\frac{1}{2\pi i}\int_{|\zeta|=\eta}\frac{\zeta^{-n-1}}{v^*(\zeta)}p_{k}(\zeta)d\zeta\,.
\end{align*}

In this integral, we replace $p_k$ by its representation \eqref{closetofinalrepresentation} to  get  
 \begin{align}\label{betterestimateforalpha-1}
\alpha_{k,k}\cj{\alpha_{n,k}}= {} & I^1_{n,k}+I^2_{n,k}+I^3_{n,k},
\end{align}
where 
 \begin{align}
		I^1_{n,k}= {} & \frac{1}{2\pi i}\int_{|\zeta|=\eta}\frac{\zeta^{-n-1}}{v^*(\zeta)}R_{k}(\zeta)d\zeta\,,\label{defI1}\\
	I^2_{n,k}={}&\frac{1}{2\pi i}\int_{|\zeta|=\eta}\frac{\zeta^{-n-1}\left[v(\zeta)v^*(\zeta)\zeta^{k+1}(1+H_k(\zeta))\right]'}{v(\zeta)v^*(\zeta)}d\zeta\,,\nonumber\\
		I^3_{n,k}={}	& \frac{1}{2\pi i}\int_{|\zeta|=\eta}\frac{\zeta^{k-n} V(\zeta,\zeta) (1+H_k(\zeta)
		)}{q^*(\zeta)q(\zeta)}d\zeta\,.\label{defI3}
	\end{align}

We proceed to estimate $I^1_{n,k}$. Note that $R_k(z)$ is well-defined and analytic for values of $z$ in the annulus  $\mu<|z|<1/\rho_w$, and that $R_k(z)=O(\mu^k)$ uniformly as $k\to\infty$ on any circle $|z|=r$ with $\mu<r<1/\rho_w$. 
Then, the value of the integral in \eqref{defI1} does not change if the circle of integration $|\zeta|=\eta$ is moved to the circle $|\zeta|=1/\mu$, which leads to   
  \begin{align}\label{estimateI1}
 \begin{split}
I^1_{n,k}=O\left(\mu^{k+n}\right),\quad \rho_w<\mu<1,
\end{split}
\end{align}
with the implied constant depending only on the value of $\mu$.

Next, we estimate $I^2_{n,k}$. By a direct computation of the derivative, we find
\begin{align*}
\frac{\left[v(\zeta)v^*(\zeta)\zeta^{k+1}(1+H_k(\zeta))\right]'}{v(\zeta)v^*(\zeta)}={} &(k+1)\zeta^{k}(1+H_k(\zeta)
)+\zeta^{k+1}H_k'(\zeta)\\
&+\frac{v'(\zeta)}{v(\zeta)}\zeta^{k+1}(1+H_k(\zeta))\\
&+\frac{{v^*}'(\zeta)}{v^*(\zeta)}\zeta^{k+1}(1+H_k(\zeta))\,.
\end{align*}

Since $H_k$ and $v$ are analytic in $\D$, we see that for $k> n$, $I^2_{n,k}$ reduces to 
\begin{align}\label{integralI2}
I^2_{n,k}= {} &\frac{1}{2\pi i}\int_{|\zeta|=\eta}\frac{{v^*}'(\zeta)}{v^*(\zeta)}\zeta^{k-n}(1+H_k(\zeta)
)d\zeta\,.
\end{align}

The number $\rho_v$ is also the smallest number such that the function ${v^*}'/v^*$ has an analytic continuation to $|z|>\rho_v$, and so we get from \eqref{integralI2}
\begin{align}\label{estimateI2}
	I^2_{n,k}= {} &O(\eta^{k-n}),\quad \rho_v<\eta<1,
\end{align}
with the implied constant depending only on $\eta$.

By comparing  \eqref{structureofL} and \eqref{secondformulaforL}, we see that  
\begin{align*}
	\frac{V(\zeta,\zeta) }{q^*(\zeta)q(\zeta)}={}&\sum_{j=1}^s\frac{\frac{m_j}{2}(1-|a_j|^2)}{(\zeta-a_j)(1-\zeta\cj{a}_j)}.
\end{align*}

We substitute this expression in \eqref{defI3} and compute the resulting integral by means of the residue theorem to obtain
\begin{align}\label{estimateI3}
	I^3_{n,k}={}	&  \sum_{j=1}^s\frac{m_k}{2}a_j^{k-n}(1+H_k(a_j))=O(\rho_a^{k-n}),
\end{align}
with $\rho_a=\max_{1\leq k\leq s}|a_k|$.

Having established \eqref{estimateI1}, \eqref{estimateI2}, and \eqref{estimateI3}, it follows from \eqref{betterestimateforalpha-1} that with $\mu$ and $\eta$ as specified in \eqref{numbers},
\begin{align*}
	\alpha_{k,k}\cj{\alpha_{n,k}}= {} &O\left(\mu^{k+n}\right)+O(\eta^{k-n})+O(\rho_a^{k-n}).
\end{align*}

Since $v^*/q$ is analytic in $|z|>\rho_w$, so is $v^*$ alone, and thus $\rho_w\leq \rho_v$. With this observation in mind, and recalling \eqref{prop-estimates-alphas} and \eqref{asymptforgamma_n}, we conclude from the latter equality that 
for every $\eta$ in the range $\rho_v<\eta<1$, there exists a constant $C_\eta$ such that 
\begin{align}\label{bestestimatealpha}
|\alpha_{n,k}|\leq \frac{C_\eta (\eta^{k-n}+\rho_a^{k-n})}{\sqrt{k}},\quad k>n\geq 0.
\end{align} 
 
This shows that the second inequality of \eqref{bound-conditions} is satisfied with $\lambda=1/2$. It follows from the discussion leading to \eqref{estimateforH} that Formula \eqref{formula-to-be-improved} holds with  $H_n(z)=O(n^{-1})$, completing the proof of Theorem  \ref{maintheorem}.

From \eqref{closetofinalrepresentation}, we see that for every $\mu$ and $r$ such that $\rho_w<\mu< r<1$, the equality
\begin{align*}
		\frac{\alpha_{n,n} p_n(z)}{(n+1)z^n}={}&O(n^{-1}(\mu/r)^n)+v^*(z)(1+H_n(z))+\frac{zv^*(z)}{n+1}H'_n(z)\\
		&+\frac{z}{n+1}\left(\frac{(vv^*)'(z)}{v(z)}+\frac{ v^*(z)V(z,z) }{q(z)q^*(z)}\right)(1+H_n(z))
\end{align*}
holds uniformly for $|z|=r$ as $n\to\infty$. This quickly leads to \eqref{strongasymptoticformula} for $|z|=r$, and by the Maximum Modulus Principle,  for $|z|\geq r$ as well.

\end{document}